 \newtheorem{theorem}{Theorem}[section]
\newtheorem{definition}[theorem]{Definition}
\newtheorem{proposition}[theorem]{Proposition}
\newtheorem{lemma}[theorem]{Lemma}
\newtheorem{corollary}[theorem]{Corollary}
\theoremstyle{definition}
\newtheorem{remark}[theorem]{Remark}
\newtheorem{example}[theorem]{Example}
\def\span{\ensuremath{\mathrm{span}}}
\def\N{\ensuremath{\mathbb{N}}}
\def\Z{\ensuremath{\mathbb{Z}}}
\def\Q{\ensuremath{\mathbb{Q}}}
\def\P{\ensuremath{\mathbf{P}}}
\def\C{\ensuremath{\mathbb{C}}}
\def\A{\ensuremath{\mathbf{A}}}
\def\RR{\ensuremath{\mathbb{R}}}
\def\R{\ensuremath{\mathbf{R}}}
\def\S{\ensuremath{\mathbf{S}}}
\def\G{\ensuremath{\mathbb{G}}}
\def\O{\ensuremath{\mathcal{O}}}
\def\I{\ensuremath{\mathcal{I}}}
\def\m{\ensuremath{\mathfrak{m}}}
\def\an{\mathrm{an}}
\def\<{\ensuremath{\langle}}
\def\>{\ensuremath{\rangle}}
\DeclareMathOperator{\conv}{conv}
\DeclareMathOperator{\divisor}{div}
\DeclareMathOperator{\ev}{ev}
\DeclareMathOperator{\Hom}{Hom}
\DeclareMathOperator{\init}{in}
\DeclareMathOperator{\Span}{span}
\DeclareMathOperator{\Spec}{Spec}
\DeclareMathOperator{\trop}{Trop}
\DeclareMathOperator{\Trop}{\bf Trop}
\begin{document}

\title{Analytification is the limit of all tropicalizations}

\author[Payne]{Sam Payne}

\begin{abstract}
We introduce extended tropicalizations for closed subvarieties of toric varieties and show that the analytification of a quasprojective variety over a nonarchimedean field is naturally homeomorphic to the inverse limit of the tropicalizations of its quasiprojective embeddings.
\end{abstract}

\maketitle

\section{Introduction}

Let $K$ be an algebraically closed field that is complete with respect to a nontrivial nonarchimedean valuation $\nu:K^* \rightarrow \RR$.  The usual tropicalization associates to a closed subvariety $X$ in the torus $T = (K^*)^m$ the underlying set $\trop(X)$ of a finite polyhedral complex in $\RR^m$ of dimension $\dim X$, which is the closure of the image of $X(K)$ under the coordinatewise valuation map.  This construction is functorial; if $\varphi: T \rightarrow T'$ is a map of tori, then the corresponding linear map $\RR^m \rightarrow \RR^n$ takes $\trop(X)$ onto $\trop(X')$, where $X'$ is the closure of $\varphi(X)$.

Many varieties, including affine space $\A^m$ and all projective varieties, have no nonconstant invertible regular functions, and hence admit no nonconstant morphisms to tori.  However, all quasiprojective varieties have many closed embeddings in toric varieties, and toric varieties have a natural stratification by orbit closures of fixed dimension, whose locally closed pieces are disjoint unions of tori.  Here we associate to a closed subvariety $X$ in a toric variety an extended tropicalization $\Trop(X)$ with a natural stratification whose locally closed pieces are disjoint unions of the usual tropicalizations of intersections of $X$ with torus orbits.  This extended tropicalization construction is functorial with respect to torus-equivariant morphisms; the inverse limit of all extended tropicalizations of all embeddings of $X$ in toric varieties may be thought of, roughly speaking, as an intrinsic tropicalization of $X$.  Here we show that such inverse limits, for affine and quasiprojective varieties, are naturally homeomorphic to the nonarchimedean analytification of $X$, in the sense of Berkovich \cite{Berkovich93}.  The nonarchimedean analytification of an affine variety can be described in terms of multiplicative seminorms, as follows.

Recall that a multiplicative seminorm $| \ |$ on a ring $A$ is a map of multiplicative monoids from $A$ to $\RR_{\geq 0}$ that takes zero to zero and satisfies the triangle inequality $|f + g| \leq |f| + |g|.$  If $A$ is a $K$-algebra, then we say $|\ |$ is compatible with $\nu$ if
\[
|a| = \exp(-\nu(a)),
\]
for $a \in K$. 

Let $X$ be an affine algebraic variety over $K$.  The analytification $X^\an$ of $X$, in the sense of Berkovich, is the set of multiplicative seminorms on the coordinate ring $K[X]$ that are compatible with $\nu$ \cite[Remark~3.4.2]{Berkovich90}, equipped with the coarsest topology such that, for every $f \in K[X]$, the evaluation map sending a seminorm $| \ |$ to $|f|$ is continuous \cite[Section~2.4]{Baker07}.  The purpose of this note is to present a natural homeomorphism from $X^\an$ to the inverse limit of the tropicalizations of all affine embeddings of $X$, which we now describe in more detail, and to prove a similar result for quasprojective varieties (see Theorem~\ref{quasiprojective analytification}).

Extend $\nu$ to a map from $K$ to the extended real line
\[
\R = \RR \cup \infty
\]
taking the zero element of $K$ to $\infty$.  The extended real line has the topology in which the completed rays $(a,\infty]$, for $a \in \RR$, are a basis of neighborhoods for $\infty$, so the map taking $a$ to $\exp(-a)$ extends to a homeomorphism from $\R$ to $\RR_{\geq 0}$.  For positive integers $m$, let $\A^m = \Spec K[x_1, \ldots, x_m]$.  Associate to each point $y = (y_1, \ldots, y_m)$ in $\A^m(K)$ its tropicalization
\[
\Trop(y) = (\nu(y_1), \ldots, \nu(y_m))
\]
in $\R^m$, and write $\Trop: \A^m(K) \rightarrow \R^m$ for the extended map taking $y$ to $\Trop(y)$.  Now $\A^m$ carries a natural action of the torus $T^m = \Spec K [ x_1^{\pm 1}, \ldots, x_m^{\pm 1}]$, and if $\varphi: \A^m \rightarrow \A^n$ is an equivariant morphism with respect to some group morphism from $T^m$ to $T^n$ then the tropicalization of $\varphi(y)$ depends only on $\Trop(y)$, and the induced map from $\R^m$ to $\R^n$ is continuous.

For each affine embedding $\iota: X \hookrightarrow \A^m$, let the tropicalization of $X$ with respect to $\iota$ be
\[
\Trop(X, \iota) = \overline{ \{ \Trop(x) \ | \ x \in X(K) \} },
\]
the closure of the image of $X(K)$ in $\R^m$.  If $\jmath : X \hookrightarrow \A^n$ is another embedding and $\varphi: \A^m \rightarrow \A^n$ is an equivariant morphism such that $\jmath = \varphi \circ \iota$, then $\Trop(\varphi)$ maps $\Trop(X, \iota)$ into $\Trop(X, \jmath)$.  We write
\[
\varprojlim \Trop(X, \iota)
\]
for the inverse limit over all affine embeddings $\iota$ of $X$ and all such maps $\Trop(\varphi)$, in the category of topological spaces.

We give a natural homeomorphism from $X^\an$ to $\varprojlim \Trop(X, \iota)$ as follows.  We follow the usual notational convention, writing $x$ for a point in $X^\an$ and $| \ |_x$ for the corresponding seminorm on $K[X]$.  Let $\iota: X \hookrightarrow \A^m$ be an embedding given by $y \mapsto (f_1(y), \ldots, f_m(y))$ for some generators $f_1, \ldots, f_m$ of $K[X]$.  Then there is a natural continuous map $\pi_\iota$ from $X^\an$ to $\R^m$ given by
\[
\pi_\iota(x) = (-\log|f_1|_x ,\ldots, -\log|f_m|_x),
\]
where we define $-\log 0 = \infty$.  Furthermore, if $\jmath$ is an embedding of $X$ in $\A^n$ and $\varphi$ is an equivariant morphism from $\A^m$ to $\A^n$ such that $\jmath = \varphi \circ \iota$, then $\pi_\jmath = \Trop(\varphi) \circ \pi_\iota$.  Hence there is an induced map $\varprojlim \pi_\iota$ from $X^\an$ to the inverse limit of the spaces $\R^{m(\iota)}$ 
over all affine embeddings $\iota: X \hookrightarrow \A^{m(\iota)}$.

\begin{theorem} \label{main}
Let $X$ be an affine variety over $K$.  Then $\varprojlim \pi_\iota$ maps $X^\an$ homeomorphically onto $\varprojlim  \Trop(X, \iota)$.
\end{theorem}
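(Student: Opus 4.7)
The plan is to show that the continuous map $\pi := \varprojlim \pi_\iota$ from $X^{\an}$ into $\varprojlim \Trop(X,\iota)$ is a continuous bijection with continuous inverse. Continuity of $\pi$ is immediate from the definition of the Berkovich topology on $X^{\an}$ together with the universal property of the inverse limit, since each coordinate $-\log |f_i|_x$ is continuous by construction. To see that $\pi$ is well-defined, i.e.\ $\pi_\iota(x) \in \Trop(X,\iota)$ for every $x$ and every $\iota$, I use that the $K$-points of $X$ are dense in $X^{\an}$ (standard for $K$ algebraically closed and nontrivially valued): continuity of each $\pi_\iota$ then forces $\pi_\iota(X^{\an}) \subseteq \overline{\pi_\iota(X(K))} = \Trop(X,\iota)$. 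Injectivity is formal: for any $f \in K[X]$, choose an affine embedding having $f$ as its $j$-th coordinate and observe that $|f|_x = \exp(-\pi_\iota(x)_j)$, so $\pi(x) = \pi(x')$ forces $|f|_x = |f|_{x'}$ for every $f$.

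The heart of the argument is surjectivity. Given a compatible family $p = (p_\iota) \in \varprojlim \Trop(X,\iota)$, I construct a seminorm $|\ |_p$ on $K[X]$ by the rule: for each $f \in K[X]$, choose any affine embedding $\iota$ having $f$ as its $j$-th coordinate and set $|f|_p := \exp(-p_\iota(j))$. Independence from the choice of $\iota$ follows by forming the common refinement $(\iota,\iota'): X \hookrightarrow \A^{m+m'}$ and invoking compatibility of $p$ with the torus-equivariant coordinate projections, together with the observation that the equality of the two candidate values is a closed condition valid on $\pi_{(\iota,\iota')}(X(K))$ and therefore on its closure.

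To verify the seminorm axioms, given $f, g \in K[X]$ I work with a single embedding $\iota'$ whose coordinates include $f$, $g$, $fg$, and $f+g$. The relations $q(fg) = q(f) + q(g)$, $q(f+g) \geq \min(q(f), q(g))$, and $q(c) = \nu(c)$ for $c \in K$ (where $q(h)$ denotes the coordinate associated to $h$) all hold on $\pi_{\iota'}(X(K))$ and are closed conditions on $\R^{m(\iota')}$; hence they persist throughout $\Trop(X,\iota')$ and in particular at $p_{\iota'}$. Exponentiating yields multiplicativity, the ultrametric (hence also the usual triangle) inequality, and compatibility with $\nu$. By construction $\pi(|\ |_p) = p$, giving the desired inverse.

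For continuity of $\pi^{-1}$, note that the Berkovich topology on $X^{\an}$ is generated by the subbasis $\{x : |f|_x \in U\}$ with $f \in K[X]$ and $U \subseteq \RR_{\geq 0}$ open. Choosing an embedding $\iota$ with $f$ as its $j$-th coordinate, each such set is the $\pi$-preimage of the open set $\{(q_\iota)_\iota : \exp(-q_\iota(j)) \in U\}$ of the inverse limit; combined with the bijectivity established above this makes $\pi$ a homeomorphism. The chief obstacle is the seminorm construction in the surjectivity step: one must arrange that every required multiplicativity, triangle, and $\nu$-compatibility relation appears as a coordinate-level identity within some affine embedding, so that the uniform closed-condition argument can be applied, and verify that the resulting seminorm is defined coherently on all of $K[X]$ rather than only on a preferred set of generators.
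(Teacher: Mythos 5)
Your proposal is correct and follows essentially the same route as the paper: reduce the homeomorphism claim to bijectivity (the Berkovich topology is the coarsest making $\varprojlim \pi_\iota$ continuous), prove injectivity by choosing an embedding with $f$ among the coordinates, and prove surjectivity by defining the seminorm coordinatewise and verifying the axioms as closed conditions holding on tropicalized $K$-points. The only cosmetic differences are that you obtain $\pi_\iota(X^\an) \subseteq \Trop(X,\iota)$ from density of $X(K)$ plus continuity rather than via the paper's properness lemma, and you spell out the well-definedness and seminorm verifications that the paper leaves as straightforward.
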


\noindent In particular, $X^\an$ is an inverse limit of spaces with stratifications whose locally closed pieces are finite polyhedral complexes.  See Sections~\ref{affine tropicalizations} and \ref{toric tropicalization} for details on the structure of $\Trop(X, \iota)$.

Theorem~\ref{main} has been known in some form to nonarchimedean analytic geometers, but we have not been able to find this homeomorphism in the literature.  Some related ideas appeared in Berkovich's work on local contractibility of analytic spaces, in the language of pluri-stable formal schemes and their skeletons \cite{Berkovich99, Berkovich04}, and in Thuillier's work on analytifications of toric varieties and toroidal embeddings \cite{Thuillier07}.  Kontsevich and Soibelman have identified the analytification with an inverse limit of ``Clemens polytopes" of simple normal crossing models over the valuation ring, using theorems on existence of semistable reductions \cite{KontsevichSoibelman06}.  Similar inverse limit constructions with simple normal crossing resolutions appear in work of Boucksom, Favre, and Jonsson on valuations and singularities in several complex variables \cite{BoucksomFavreJonsson08}.  Arguments close to the spirit of this paper also appear in Gubler's elegant study of tropicalization of nonarchimedean analytic spaces \cite{Gubler07}.

We hope that the elementary algebraic presentation here will help open the ideas and results of analytic geometry to tropical geometers.  Relations to the basic tools of nonarchimdean analytic geometry, as developed for instance in \cite{BGR84}, should be useful for the development of rigorous algebraic foundations for tropical geometry, and may help explain recent results on the topology and geometry of tropicalizations of algebraic varieties, including theorems on singular cohomology \cite{Hacking08, HelmKatz08} and $j$-invariants of elliptic curves \cite{KatzMarkwigMarkwig08, Speyer07b, KatzMarkwigMarkwig08b}.

\noindent \textbf{Acknowledgments.}  I thank J.~Rabinoff and R.~Vakil for many stimulating conversations, and am grateful to M.~Baker, V.~Berkovich, B.~Conrad, M. Jonsson, D.~Stepanov, and the referee for helpful comments on earlier versions of this paper.

\section{Tropicalizations of affine embeddings} \label{affine tropicalizations}

We begin by recalling some of the basic definitions and properties of tropicalization.  See \cite[Section~2]{tropicalfibers} for details and further references.

The usual tropicalization map $\trop$ takes a point $y = (y_1, \ldots, y_m)$ in the torus $T^m(K)$ to its vector of valuations
\[
\trop(y) = (\nu(y_1), \ldots, \nu(y_m))
\]
in $\RR^m$.  If $X$ is a closed subvariety of $T^m$, then $\trop(X)$ is defined to be the closure of the image of $X(K)$ under $\trop$, which is the underlying set of an integral $G$-rational polyhedral complex of pure dimension equal to the dimension of $X$, where $G$ is the image of $K^*$ under $\nu$.  In other words, the polyhedral complex can be chosen so that each polyhedron is cut out by affine linear inequalities with linear coefficients in $\Z$ and constants in $G$.  Since $K$ is algebraically closed, $\trop(X(K))$ is exactly the set of $G$-rational points in $\trop(X)$.  Here we use an extended tropicalization map from $\A^m(K)$ to $\R^m$, following well-known ideas of Mikhalkin \cite{Mikhalkin06}, Speyer and Sturmfels \cite{SpeyerSturmfels04b}, and others.  In Section \ref{toric tropicalization}, we generalize this extended tropicalization map to arbitrary toric varieties.

As explained in the introduction, we extend the valuation $\nu$ to a map from $K$ to $\R$ by setting $\nu(0) = \infty$.  We extend $\trop$ similarly; if $y = (y_1, \ldots, y_m)$ is a point in $\A^m(K)$, then we define $\Trop(y)$ to be $(\nu(y_1), \ldots, \nu(y_m))$ in $\R^m$.  Now $\R^m$ is not a linear space in any usual sense.  However, $\R^m$ is the disjoint union of the linear spaces
\[
\RR^I = \{(v_1, \ldots, v_m) \in \R^m \ | \ v_i \mbox{ is finite if and only if } i \in I \},
\]
for $I \subset \{1, \ldots, m \}$, each of which is locally closed in $\R^m$.  Note that $\RR^I$ is in the closure of $\RR^J$ if and only if $I$ is a subset of $J$, and these linear spaces fit together to give a natural stratification
\[
\R^m_0 \subset \R^m_1 \subset \cdots \subset \R^m_m = \R^m,
\]
where $\R^m_i$ is the union of those $\RR^I$ such that the cardinality of $I$ is at most $i$.  Similarly, $\A^m$ is the disjoint union of the locally closed subvarieties
\[
T^I = \{(y_1, \ldots, y_m) \in \A^m \ | \ y_i \mbox{ is nonzero if and only if } i \in I \},
\]
and the extended tropicalization map is the disjoint union of the usual tropicalization maps from $T^I$ to $\RR^I$.  If $X$ is a closed subvariety of $\A^m$, then these stratifications induce a natural stratification of $\Trop(X)$, which is discussed in more detail and greater generality in Section~\ref{toric tropicalization}.

We begin the proof of Theorem~\ref{main} by showing that $\Trop(X, \iota)$ is the image of $\pi_\iota$.

\begin{lemma} \label{proper}
The natural projection $\pi_m$ from the analytification of $\A^m$ to $\R^m$ is proper.
\end{lemma}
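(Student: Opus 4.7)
The plan is to reduce properness of $\pi_m$ to compactness of preimages of the standard boxes in $\mathbf{R}^m$, and then identify such a preimage with (what amounts to) a Berkovich affinoid, proved compact by a direct Tychonoff argument.

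First I would pass to the homeomorphism $\mathbf{R}^m \xrightarrow{\sim} \mathbf{R}_{\geq 0}^m$ given by $(v_1,\ldots,v_m)\mapsto(\exp(-v_1),\ldots,\exp(-v_m))$, under which $\pi_m$ becomes the map sending a seminorm $|\ |_x$ to $(|x_1|_x,\ldots,|x_m|_x)$. Since $\mathbf{R}^m$ (equivalently $\mathbf{R}_{\geq 0}^m$) is locally compact Hausdorff and every compact subset of $\mathbf{R}_{\geq 0}^m$ is contained in some box $B_r := [0,r_1]\times\cdots\times[0,r_m]$, it suffices to prove that the preimage
\[
U_r = \{\,|\ | \in (\mathbf{A}^m)^{\an} \ : \ |x_i| \leq r_i \text{ for all } i\,\}
\]
is compact for every choice of positive reals $r_1,\ldots,r_m$; properness of $\pi_m$ will then follow because closed subsets of compact Hausdorff spaces are compact and $\pi_m^{-1}(C) \subset U_r$ whenever $C \subset B_r$ is compact.

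To prove compactness of $U_r$, I would embed $(\mathbf{A}^m)^{\an}$ into the product $\prod_{f \in K[x_1,\ldots,x_m]} \mathbf{R}_{\geq 0}$ via $|\ |\mapsto (|f|)_f$, equipped with the product topology. By definition of the Berkovich topology, this embedding is a homeomorphism onto its image. For any seminorm in $U_r$ and any polynomial $f = \sum_\alpha c_\alpha x^\alpha$, the triangle inequality and multiplicativity give the a priori bound
\[
|f| \ \leq \ \sum_\alpha |c_\alpha| \cdot r_1^{\alpha_1}\cdots r_m^{\alpha_m} \ =: \ B_f,
\]
where $|c_\alpha| = \exp(-\nu(c_\alpha))$ by compatibility with $\nu$. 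Hence $U_r$ is contained in the compact product $\prod_f [0, B_f]$, which is compact by Tychonoff's theorem.

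It remains to check that $U_r$ is closed in this product, which is a routine verification: the multiplicativity equalities $|fg| = |f|\cdot|g|$, the triangle inequalities $|f+g| \leq |f|+|g|$, the normalization $|1|=1$, $|0|=0$, the compatibility $|a|=\exp(-\nu(a))$ for $a\in K$, and the bounds $|x_i| \leq r_i$ are all closed conditions in the product topology since each involves only finitely many coordinates and continuous functions thereof. The only mild subtlety (and the main thing to get right) is to write down these closed conditions cleanly and note that their intersection is exactly $U_r$, so that $U_r$ is a closed subset of a compact Hausdorff space and therefore compact, completing the proof.
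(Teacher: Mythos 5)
Your proof is correct, but it takes a different route from the paper. The paper argues in one variable: it writes $\pi_m$ as built from $m$ copies of $\pi_1$, extends $\pi_1$ to a map $(\P^1)^\an \rightarrow \R \cup \{-\infty\}$, and deduces properness of $\pi_1$ from compactness of $(\P^1)^\an$, a standard fact of Berkovich theory. You instead prove directly that the preimage of each box $[0,r_1]\times\cdots\times[0,r_m]$ is compact, by embedding $(\A^m)^\an$ into $\prod_{f}\RR_{\geq 0}$, bounding each coordinate by $B_f=\sum_\alpha |c_\alpha| r^\alpha$, and cutting out the seminorm axioms by closed conditions inside the Tychonoff-compact product $\prod_f[0,B_f]$; this is in essence the standard compactness proof for Berkovich spectra, specialized to the closed polydisc. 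What your approach buys is self-containedness: it uses no input beyond Tychonoff and the definition of the topology, and it works in all $m$ variables at once, sidestepping the point (elided by the paper's phrase ``product of $m$ copies of $\pi_1$'') that $(\A^m)^\an$ is not literally the topological product of $m$ copies of $(\A^1)^\an$, so that the reduction to $\pi_1$ implicitly needs the map $(\A^m)^\an \rightarrow ((\A^1)^\an)^m$ to be proper --- which is exactly the compactness of your sets $U_r$. What the paper's approach buys is brevity, given the standard facts that $(\P^1)^\an$ is compact and that properness is stable under restriction to preimages of subspaces. Your reduction step (compacts of $\RR_{\geq 0}^m$ sit inside boxes, and closed subsets of the compact $U_r$ are compact) is also fine.
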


\begin{proof}
The map $\pi_m$ is a product of $m$ copies of $\pi_1$, so it will suffice to show that $\pi_1$ is proper.  Now $\pi_1$ extends to a continuous map from the analytification of $\P^1$ to $\R \cup \{-\infty\}$, and this map is proper because the analytification of $\P^1$ is compact.  Therefore $\pi_1$ is the restriction of a proper map to the preimage of $\R$, and hence is proper.
\end{proof}

\begin{proposition} \label{image}
For any embedding $\iota: X \hookrightarrow \A^m$, the image of the induced map
\[
\pi_\iota: X^\an \longrightarrow \R^m
\]
is exactly $\Trop(X,\iota)$.
\end{proposition}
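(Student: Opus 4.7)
The plan is to prove the two inclusions $\pi_\iota(X^\an) \supseteq \Trop(X, \iota)$ and $\pi_\iota(X^\an) \subseteq \Trop(X, \iota)$ separately. For the first, since $X$ is Zariski closed in $\A^m$, the analytification $X^\an$ is a closed subspace of $(\A^m)^\an$, and $\pi_\iota$ is the restriction of $\pi_m$ to this closed subspace. As restrictions of proper maps to closed subspaces are proper, Lemma~\ref{proper} implies $\pi_\iota$ is proper and in particular a closed map. Every $K$-point $x \in X(K)$ gives an evaluation seminorm $f \mapsto |f(x)|$ in $X^\an$ whose image under $\pi_\iota$ is exactly $\Trop(\iota(x))$, so $\pi_\iota(X^\an)$ is a closed subset of $\R^m$ containing $\{\Trop(\iota(x)) : x \in X(K)\}$, and hence contains its closure $\Trop(X, \iota)$.

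For the reverse inclusion, the cleanest route is to invoke the standard fact that the classical points $X(K)$ are dense in $X^\an$ when $K$ is algebraically closed and nontrivially valued. Continuity of $\pi_\iota$ then yields
\[
\pi_\iota(X^\an) = \pi_\iota\bigl(\overline{X(K)}\bigr) \subseteq \overline{\pi_\iota(X(K))} = \Trop(X, \iota).
\]
If one prefers to avoid density as a black box, one can argue directly from the structure of seminorms: a point $y \in X^\an$ determines a prime ideal $\mathfrak{p}_y = \{f \in K[X] : |f|_y = 0\}$ together with a real-valued valuation on the function field of $Y := V(\mathfrak{p}_y)$ extending $\nu$. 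Setting $I = \{i : f_i \notin \mathfrak{p}_y\}$, this valuation defines a point of $(Y \cap T^I)^\an$, and the classical Bieri--Groves identification of $\trop(Y \cap T^I) \subseteq \RR^I$ with the image of $(Y \cap T^I)^\an$ under coordinatewise valuation then places $\pi_\iota(y)$ inside $\{\infty\}^{\{1,\ldots,m\}\setminus I} \times \trop(Y \cap T^I) \subseteq \Trop(X, \iota)$.

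The main obstacle is this reverse containment: ruling out the possibility that some exotic seminorm on $K[X]$ tropicalizes to a point outside the closure of $\Trop(\iota(X(K)))$. Both routes above ultimately lean on a substantive input---either the density of classical points in Berkovich spaces, or the classical identification of tropicalization with the image of analytification for subvarieties of tori---and the choice between them is expository, the first being slicker while the second makes the stratified structure of $\Trop(X, \iota)$ in terms of the intersections $X \cap T^I$ more transparent.
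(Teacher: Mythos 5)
Your proposal is correct and uses exactly the same ingredients as the paper: properness of $\pi_\iota$ as the restriction of $\pi_m$ (Lemma~\ref{proper}) to the closed subset $X^\an$, together with density of $X(K)$ in $X^\an$; the paper simply compresses your two inclusions into the single observation that a proper (hence closed) map sends the closure of a dense subset onto the closure of its image. The alternative Bieri--Groves route you sketch is a genuine variant, but your primary argument coincides with the paper's.
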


\begin{proof}
The projection $\pi_\iota$ is proper since it is the restriction of the proper map $\pi_m$ to the closed subset $X^\an$, and $X(K)$ is dense in $X^\an$.  Therefore, the image $\pi_\iota (X^\an)$ is exactly the closure of the set of tropicalizations of $K$-points in $X$, which is $\Trop(X, \iota)$.
\end{proof}

\begin{proof}[Proof of Theorem~\ref{main}]
The topology on $\varprojlim \Trop(X, \iota)$ is the coarsest such that the restrictions of the coordinate projections on $\R^{m(\iota)}$ are continuous, for all embeddings $\iota$ of $X$.  It follows that the topology on $X^\an$ is the coarsest such that $\varprojlim \pi_\iota$ is continuous.  Therefore, to prove that $\varprojlim \pi_\iota$ is a homeomorphism onto $\varprojlim \Trop(X, \iota)$, it will suffice to show that it is bijective.

We first show that $\varprojlim \pi_\iota$ is injective.  Suppose $\varprojlim \pi_\iota (x) = \varprojlim \pi_\iota(x')$.  We must show that $|f|_x = |f|_{x'}$ for any $f$ in the coordinate ring $K[X]$.  Choose any generating set $f_1, \ldots, f_m$ for $K[X]$, with $f_1 = f$, and let $\iota$ be the induced embedding of $X$.  Then $\pi_\iota(x) = \pi_\iota(x')$ by hypothesis, and projection to the first coordinate shows that $|f|_x = |f|_{x'}$, as required.

It remains to show that $\varprojlim \pi_\iota$ is surjective.  Let $(y_\iota)$ be a point in $\varprojlim \Trop(X, \iota)$.  Define a point $x \in X^\an$, as follows.  For each $f \in K[X]$, choose a generating set $f_1, \ldots, f_m$ for $K[X]$ with $f_1 = f$, and let $\iota: X \hookrightarrow \A^m$ be the corresponding embedding.  Then define $|f|_x$ to be the exponential of the negative of the first coordinate of $y_\iota \in \R^m$.  If $\jmath$ is another such embedding, given by a generating set $g_1, \ldots, g_n$ with $g_1 = f$ then $\iota \times \jmath$ embeds $X$ in $\A^{m + n}$, and there is an automorphism of $\iota \times \jmath$ that transposes the first and $(m+1)$th coordinates.  Since $(y_\iota)$ is an inverse system, projecting to $\A^m$ and $\A^n$ shows that the first coordinates of $y_\iota$ and $y_\jmath$ are equal, so $| \ |_x$ is well-defined.  It is straightforward to check the multiplicative property $|f \cdot g|_x = |f|_x \cdot |g|_x$ and the triangle inequality $| f + g|_x \leq |f |_x + |g|_x$ by considering any affine embedding of $X$ defined by a generating set for $K[X]$ that contains both $f$ and $g$.  So $| \ |_x$ is a multiplicative seminorm that is compatible with $\nu$, and the image of $x$ under $\varprojlim \pi_\iota$ is $(y_\iota)$, by construction.  Therefore, $\varprojlim \pi_\iota$ is surjective, and the theorem follows.
\end{proof}

\section{Tropicalizations of toric embeddings} \label{toric tropicalization}

In this section, we generalize the extended tropicalizations for embeddings of a variety in affine space to embeddings in arbitrary toric varieties.  This construction is applied in Section~\ref{quasiproj section} to study the analytifications of quasiprojective varieties.

Let $N \cong \Z^n$ be a lattice, and let $\Delta$ be a fan in $N_\R = N \otimes_\Z \R$, with $Y = Y(\Delta)$ the corresponding toric variety.  Let $M = \Hom(N,\Z)$ be the lattice dual to $N$, which is the lattice of characters of the dense torus $T \subset Y$.  See \cite{Fulton93} for standard notation and background for toric varieties.  We construct a space $\Trop(Y)$ with a functorial tropicalization map
\[
Y(K) \rightarrow \Trop(Y)
\]
as follows.  For each cone $\sigma \in \Delta$, let $N(\sigma) = N_\R / \Span(\sigma)$.  As a set, $\Trop(Y)$ is a disjoint union of linear spaces
\[
\Trop(Y) = \coprod_{\sigma \in \Delta} N(\sigma).
\]
Now $Y$ is a disjoint union of tori $T_\sigma$, where $T_\sigma$ is the torus whose lattice of one parameter subgroups is the image of $N$ in $N(\sigma)$.  In other words, $T_\sigma$ is the unique quotient of the dense torus $T$ that acts simply transitively on the orbit corresponding to $\sigma$.  Then the tropicalization map from $Y(K)$ to $\Trop(Y)$ is the disjoint union of the ordinary tropicalization maps from $T_\sigma(K)$ to $N(\sigma)$.

We now describe the topology on $\Trop(Y)$, considering first the affine case.  Let $U_\sigma$ be an affine toric variety.  Recall that the coordinate ring of $U_\sigma$ is the semigroup ring $K[S_\sigma]$, where $S_\sigma = \sigma^\vee \cap M$ is the multiplicative monoid of characters of $T$ that extend to regular functions on $U_\sigma$.  The preimage of $\RR$ under a monoid homomorphism from $S_\sigma$ to $\R$ is $\tau^\perp \cap S_\sigma$, for some face $\tau \preceq \sigma$.  Therefore, the disjoint union $\coprod_{\tau \preceq \sigma} N(\tau)$ is naturally identified with $\Hom(S_\sigma, \R)$,
where $v \in N(\tau)$ corresponds to the monoid homomorphism $\phi_v: S_\sigma \rightarrow \R$ given by
\[
\phi_v(u) = \left \{ \begin{array}{ll} \<u,v\> & \mbox{ if } u \in \tau^\perp \\
\infty & \mbox{ otherwise.}
\end{array} \right.
\]
This gives a natural identification
\[
\Trop(U_\sigma) = \Hom(S_\sigma, \R),
\]
and we give $\Trop(U_\sigma)$ the induced topology, as a subspace of $\R^{S_\sigma}$.

\begin{remark}
The monoid $S_\sigma$ is finitely generated.  Any choice of generators gives an embedding of $\Hom(S_\sigma, \R)$ in $\R^m$, and $\Trop(U_\sigma)$ carries the subspace topology.  Equivalently, a choice of generators for $S_\sigma$ gives a closed embedding of $U_\sigma$ in $\A^m$, and $\Trop(U_\sigma)$ is the tropicalization of this embedding.
\end{remark}

Suppose the toric variety is affine, and hence isomorphic to some $U_\sigma$.  The tropicalization map from $U_\sigma(K)$ to $\Trop(U_\sigma)$ can be interpreted in terms of these $\Hom$ spaces as follows.  The $K$-points of $U_\sigma$ correspond naturally and bijectively, through evaluation maps, to homomorphisms from $S_\sigma$ to the multiplicative monoid of $K$.  If $y$ is point in $U_\sigma(K)$, composing the evaluation map $\ev_y$ with the extended valuation $\nu: K \rightarrow \R$ gives a monoid homomorphism $\Trop(y)$ from $S_\sigma$ to $\R$.  The preimage of $\RR$ under this map is the intersection of $S_\sigma$ with $\tau^\perp$, where $\tau$ is the face of $\sigma$ corresponding to the orbit that contains $y$, and the induced map $(\tau^\perp \cap M) \rightarrow \RR$ is the ordinary tropicalization map for the closed embedding of $X \cap T_\tau$ in $T_\tau$.  Furthermore, there is a natural continuous and proper map from $U_\sigma^\an$ to $\Trop(U_\sigma)$ that takes a point $y$ to the monoid homomorphism $u \mapsto -\log | \chi^u|_y$.

We now consider the case where the toric variety $Y(\Delta)$ is not necessarily affine.  If $\sigma \in \Delta$ is a cone and $\tau$ is a face of $\sigma$, then $\Hom(S_\tau, \R)$ is canonically identified with the topological submonoid of $\Hom(S_\sigma, \R)$ consisting of those maps for which the image of $\tau^\perp \cap M$ is contained in $\RR$, and we define $\Trop(Y)$ to be the topological space defined by gluing along these identifications.  The natural maps from $U_\sigma^\an$ to $\Trop(U_\sigma)$ glue together to give a continuous and proper map from $Y^\an$ to $\Trop(Y)$.

Let $m$ be the dimension of $Y$.  There is also a natural stratification
\[
\Trop(Y)_0 \subset \cdots \subset \Trop(Y)_m = \Trop(Y),
\]
where $\Trop(Y)_i$ is the union of the vector spaces $N(\sigma)$ of dimension at most $i$, which are exactly those $N(\sigma)$ such that $\dim(\sigma) \geq m-i$.

\begin{example}
Suppose $X = \A^m$ is affine space.  Then $\Trop(X) = \Hom (\N^m, \R)$, by definition, which is naturally identified with $\R^m$.  In the stratification above, $\Trop(X)_i$ is the union of the coordinate subspaces $\R^I = \Hom(\N^I, \R)$ for subsets $I \subset \{1, \ldots, m \}$ of cardinality at most $i$.  In particular, this definition of tropicalization of toric varieties agrees with the definition of tropicalization of affine space in Section~\ref{affine tropicalizations}.
\end{example}

\begin{remark}
Roughly speaking, for any $\tau \preceq \sigma$, the tropicalization map may be thought of as a generalized moment map that is independent of polarization, with a stratification that corresponds to the stratification of a polytope $P$ by the unions of its faces of fixed dimension.  Suppose $Y$ is projective and $L$ is an ample $T$-equivariant line bundle on $Y$.  Then for each maximal cone $\sigma \in \Delta$ there is a unique character $u_\sigma \in M$ such that $L|_{U_\sigma}$ is equivariantly isomorphic to $\O(\divisor \chi^{u_\sigma})$.  The algebraic moment map $\mu$ from $Y(K)$ to $M_\RR$ sends a point $y$ in the dense torus $T$ to
\[
\mu(y) = \frac{\sum_\sigma | \chi^{u_\sigma}(y)| \cdot u_\sigma}{\sum_\sigma |\chi^{u_\sigma}(y)|}.
\]
Then $\mu(y)$ depends only on $\Trop(y)$, and $\mu$ extends to a homeomorphism from $\Trop(Y)$ onto the moment polytope $P = \conv \{u_\sigma \}$, and $\Trop(Y)_i$ is exactly the preimage of the union of the $i$-dimensional faces of $P$.  Compactifications of amoebas in moment polytopes were introduced in \cite{GKZ}, and have been studied in many subsequent papers.  The extended tropicalization map is more convenient than moment maps in some contexts due to its covariant functorial properties, independence of polarization, and the integral structures on the vector spaces $N(\sigma)$.
\end{remark}

\begin{remark} 
It is sometimes helpful to think of the topology on $\Trop(Y)$ locally near a point $v$ in $N(\sigma)$.  Roughly speaking, a sequence of points in $N(\tau)$ converges to $v$ if their projected images converge to $v$ in $N(\sigma)$ and they move toward infinity in the cone of directions specified by $\sigma$.  More precisely, the topology on $\Trop(Y)$ is determined by the following basis.  Choose a finite set of generators $u_1, \ldots, u_r$ for the monoid $S_\sigma$, and note that $u_i$ can be evaluated on $N(\tau)$ provided that $u_i$ is in $\tau^\perp$.  For each open set $U \subset N(\sigma)$ and positive number $n$, let $C(U, n)$ be the truncated cylinder
\[
C(U,n) = \bigcup_{\tau \preceq \sigma} \big \{ w \in N(\tau) \ | \ \pi(w) \in U \mbox{ and } \< u_i, w \> > n \mbox{ for } u_i \in \tau^\perp \smallsetminus \sigma^\perp \big \},
\]
where $\pi: N(\tau) \rightarrow N(\sigma)$ is the canonical projection.  We claim that these truncated cylinders are a basis for the topology on $\Trop(Y)$.  To see this, note that $\Trop(U_\sigma)$ is an open neighborhood of any point $v$ in $N(\sigma)$.  The choice of generators for $S_\sigma$ determines an embedding of $\Trop(U_\sigma)$ in $\R^r$, and the $i$th coordinate of $v$ is finite if and only if $u_i$ is in $\sigma^\perp$.  Then a subset $S$ of $\Trop(Y)$ is a neighborhood of $v$ if and only if it contains every point whose $i$th coordinate is sufficiently close to the $i$th coordinate of $v$, for $u_i$ in $\sigma^\perp$, and whose other coordinates are sufficiently large.  This is the case if and only if $S$ contains some truncated cylinder $C(U,n)$, where $U$ contains $v$.  
\end{remark}

\begin{remark}
One can also describe the topological space $\Trop(Y)$ globally, as a quotient of an open subset of $\R^m$, by tropicalizing Cox's construction of toric varieties as quotients of open subsets of affine spaces \cite{Cox95}, as follows.  First, consider the case where the rays of $\Delta$ span $N_\RR$.  Let $\Delta(1)$ be the set of rays of $\Delta$, and let $\sigma(1)$ be the subset consisting of rays of $\sigma$, for each cone $\sigma \in \Delta$.  Let $\Delta'$ be the fan in $\RR^{\Delta(1)}$ whose maximal cones are of the form $\RR_{\geq 0}^{\sigma(1)}$, for maximal cones $\sigma$ in $\Delta$, with $Y'$ the corresponding invariant open subvariety of $\A^{\Delta(1)}$.  The natural projection $\RR^{\Delta(1)} \rightarrow N_\RR$ taking a standard basis vector to the primitive generator of the corresponding ray induces a map of toric varieties $\varphi: Y' \rightarrow Y$.  We claim that $\Trop(\varphi)$ is surjective and $\Trop(Y)$ carries the quotient topology.  Say $v$ is a point in $N(\sigma) \subset \Trop(Y)$.  Then $\RR^{\Delta(1) \smallsetminus \sigma(1)}$ surjects onto $N(\sigma)$, since the rays of $\Delta$ span $N_\RR$.  It remains to show that $\Trop(Y)$ carries the quotient topology.

Let $S$ be a subset of $\Trop(Y)$ containing $v$ such that $\varphi^{-1}(S)$ is open, and let $v'$ be a preimage of $v$ in $\RR^{\Delta(1) \smallsetminus \sigma(1)}$.  To show that $\Trop(Y)$ carries the quotient topology, we must show that $S$ is a neighborhood of $v$.  Now $\varphi^{-1}(S)$ contains a basic open neighborhood $C(U', n')$ of $v'$.  Linear projection maps $U'$ onto an open subset $U \subset N(\sigma)$, and $S$ contains $C(U,n)$, for $n$ sufficiently large.  Therefore $S$ is a neighborhood of $v$, as required.

If the rays of $\Delta$ do not span $N_\R$, then let $\Delta_0$ be the fan given by $\Delta$ in the span of $\Delta(1)$, with $Y_0$ the corresponding toric variety.  Any choice of splitting $N_\RR \cong \span (\Delta(1)) \times \RR^k$ induces identifications $Y \cong Y_0 \times \G_m^k$ and $\Trop(Y) \cong \Trop(Y_0) \times \RR^k$, making $\Trop(Y)$ a quotient of an open subset of $\R^{\Delta(1)} \times \RR^k$.
\end{remark}

Tropicalization of toric varieties is functorial with respect to arbitrary equivariant morphisms, such as inclusions of invariant subvarieties.  To see this functoriality, it is convenient to work with extended monoids $\S_\sigma = S_\sigma \cup \infty$, where $u + \infty = \infty$ for all $u$, and pointed monoid homomorphisms that take $\infty$ to $\infty$.  Any monoid homomorphism from $S_\sigma$ to $\R$ extends uniquely to a pointed morphism on $\S_\sigma$, so there is a natural identification $\Hom(\S_\sigma, \R) = \Trop(U_\sigma)$.  Suppose $U_\tau$ is an affine toric variety with dense torus $T'$, and $M'$ is the lattice of characters of $T'$.  If $\varphi: U_\tau \rightarrow U_\sigma$ is an equivariant morphism then pulling back regular functions gives a monoid map $\varphi^*: \S_\sigma \rightarrow \S_\tau$, where $\varphi^* (u)$ is defined to be $\infty$ if the pullback of the corresponding regular function vanishes on $U_\tau$.  This map of monoids induces a continuous map of $\Hom$ spaces
\[
\Trop(\varphi): \Trop(U_\tau) \rightarrow \Trop(U_\sigma),
\]
taking a monoid homomorphism $\phi_v: \S_\tau \rightarrow \R$ to $\phi_v \circ \varphi^*$.  Now, if $\varphi': Y' \rightarrow Y$ is an equivariant map of toric varieties, then each invariant affine open subvariety of $Y'$ maps into some invariant affine open subvariety of $Y$, and the induced tropicalization maps for the restrictions of $\varphi'$ to invariant affine opens glue together to give a canonical map from $\Trop(Y')$ to $\Trop(Y)$.   

We now generalize this tropicalization construction to closed subvarieties of toric varieties.

\begin{definition}
Let $X$ be a variety over $K$, and let $\iota: X \hookrightarrow Y(\Delta)$ be a closed embedding.  Then the tropicalization $\Trop(X, \iota)$ is the closure of the image of $X(K)$ in $\Trop(Y)$.
\end{definition}

\noindent When the embedding is fixed, write simply $\Trop(X)$ for the tropicalization of $X \subset Y$.

Basic results about tropicalizations of subvarieties of tori extend in a straightforward way to these extended tropicalizations of subvarieties of toric varieties.  For instance, if $\jmath: X \rightarrow Y(\Delta)$ is a toric embedding, then $X^\an$ is covered by the analytifications of the embedded affine spaces $X \cap U_\sigma$, and the corresponding projections glue to give a proper continuous map $\pi_\jmath : X^\an \rightarrow \Trop(X, \jmath)$.  We now generalize the basic results linking tropicalization to initial forms and degenerations to these extended tropicalizations.

Let $R$ be the valuation ring in $K$, with maximal ideal $\m$ and residue field $k = R/\m$.  Recall that to each point $v \in N(\sigma)$ we associate the tilted group ring $R[M]^v$, whose elements are Laurent polynomials $f = a_1 x^{u_1} + \cdots + a_r x^{u_r}$ such that $\nu(a_i) \geq \<u_i, v\>$.  The initial form $\init_v(f)$ is the image of $f$ in $k[M]^v = R[M]^v \otimes_R k$.  If $X \subset T$ is a closed subvariety, then the tropical degeneration $X_v$ is the $k$-subvariety of $T$ cut out by the initial forms of all Laurent polynomials in $I(X) \cap R[M]^v$.  See \cite{tropicalfibers} for further details.

\begin{proposition} \label{equivalent conditions}
Let $\sigma$ be a cone in $\Delta$, and let $v$ be a $G$-rational point in $N(\sigma)$.  Then the following are equivalent:
\begin{enumerate}
\item The extended tropicalization $\Trop(X)$ contains $v$.
\item The ordinary tropicalization $\trop(X \cap T_\sigma)$ contains $v$.
\item There is a point $x \in X(K)$ such that $\Trop(x) = v$.
\item The tropical degeneration $(X \cap T_\sigma)_v$ is nonempty.
\item For every $f$ in $I(X \cap T_\sigma) \cap R[M]^v$, the initial form $\init_v(f)$ is not a monomial.
\end{enumerate}
\end{proposition}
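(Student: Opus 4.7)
My plan is to identify $(2) \iff (3) \iff (4) \iff (5)$ as the standard fundamental theorem of tropical geometry applied to the closed subvariety $X \cap T_\sigma$ of the torus $T_\sigma$, as recalled in \cite{tropicalfibers}; the genuinely new content is then the equivalence $(1) \iff (2)$, bridging the extended tropicalization with the ordinary tropicalization on the single stratum $N(\sigma)$.

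The direction $(2) \Rightarrow (1)$ is immediate from the definitions: $\trop(X \cap T_\sigma)$ is the closure in $N(\sigma)$ of $\Trop((X \cap T_\sigma)(K))$, and since $(X \cap T_\sigma)(K) \subset X(K)$ and $N(\sigma) \subset \Trop(Y)$, any such point is also in the closure of $\Trop(X(K))$ taken inside $\Trop(Y)$, which is $\Trop(X)$. For the converse $(1) \Rightarrow (2)$, I would first record the toric analog of Proposition~\ref{image}: the proper continuous map $\pi_\iota: X^\an \to \Trop(X)$ constructed at the end of Section~\ref{toric tropicalization} is surjective, by the same properness-plus-density argument. After replacing $X$ with $X \cap U_\sigma$ and $Y$ with $U_\sigma$ (valid because $\Trop(X) \cap \Trop(U_\sigma) = \Trop(X \cap U_\sigma)$ and $v \in N(\sigma) \subset \Trop(U_\sigma)$), one lifts $v$ to some $x \in X^\an$ and reads off, using the identification $\Trop(U_\sigma) = \Hom(S_\sigma, \R)$, the equalities $|\chi^u|_x = 0$ for every $u \in S_\sigma \smallsetminus \sigma^\perp$ and $-\log |\chi^u|_x = \langle u, v\rangle$ for $u \in \sigma^\perp \cap M$. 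The first batch forces the seminorm $|\cdot|_x$ on $K[X]$ to annihilate the ideal of $X \cap T_\sigma$ (which is generated by the monomials $\chi^u$ with $u \in S_\sigma \smallsetminus \sigma^\perp$), so $|\cdot|_x$ descends to a seminorm on $K[X \cap T_\sigma]$; this is a point $x' \in (X \cap T_\sigma)^\an$ whose ordinary tropicalization in $N(\sigma)$, computed from the second batch of equalities, is exactly $v$. Since $K$-points are dense in $(X \cap T_\sigma)^\an$ and ordinary tropicalization is continuous, $v$ lies in the closure of $\trop((X \cap T_\sigma)(K))$, i.e., in $\trop(X \cap T_\sigma)$.

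The main obstacle I anticipate is the bookkeeping in the descent step, which hinges on being comfortable with the identification of $N(\sigma) \subset \Trop(U_\sigma)$ as the locus of monoid homomorphisms $S_\sigma \to \R$ whose infinite fibre is exactly $S_\sigma \smallsetminus \sigma^\perp$. Once that translation is in hand, both the vanishing of $|\cdot|_x$ on the defining ideal and the identification of the descended tropicalization with $v$ are formal, and the equivalence chain closes up via the torus-case fundamental theorem.
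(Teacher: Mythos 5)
Your proposal is correct and takes essentially the same route as the paper: the equivalences (2)--(5) are quoted as standard, $v \in \Trop(X)$ is lifted to a point of $X^\an$ using properness of $\pi$ plus density of $K$-points, the resulting seminorm is seen to kill every function vanishing on $X \cap T_\sigma$ so that the point lies in $(X \cap T_\sigma)^\an$, and the image of that point under the ordinary tropicalization map then places $v$ in $\trop(X \cap T_\sigma)$. The only cosmetic differences are that the paper closes the loop with the trivial implication (3) $\Rightarrow$ (1) instead of your direct (2) $\Rightarrow$ (1), and that the ideal of $X \cap T_\sigma$ in $K[X \cap U_\sigma]$ is in general only the radical of the monomial ideal you describe --- harmless, since a multiplicative seminorm annihilating an ideal annihilates its radical.
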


\begin{proof}
The equivalence of (2)--(5) is standard; see \cite{SpeyerSturmfels04} and \cite{tropicalfibers}.   And (3) implies (1) by definition.  We now show that (1) implies (2).

Suppose $v$ is in the extended tropicalization $\Trop(X)$.  The projection from $X^\an$ to $\Trop(X)$ is proper with dense image, and hence surjective, so we can choose a point $x$ in $X^\an$ whose image in $\Trop(X)$ is equal to $v$.  Then the multiplicative seminorm $|f|_x$ vanishes for any function $f \in K[X \cap U_\sigma]$ that vanishes on $X \cap T_\sigma$.  So $x$ is in the analytification $(X \cap T_\sigma)^\an \subset X^\an$.  Therefore, $v$ is in the image of $(X \cap T_\sigma)^\an$ and hence must be in $\trop(X \cap T_\sigma)$, as required.
\end{proof}

\begin{corollary}
If $V$ is a $T$-invariant subvariety of $Y$, then
\[
\Trop(X) \cap \Trop(V) = \Trop(X \cap V).
\]
\end{corollary}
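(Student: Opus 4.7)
The plan is to combine the orbit decomposition of the $T$-invariant subvariety $V$ with the surjectivity of the projection $X^\an \to \Trop(X)$ that drove the proof of Proposition~\ref{equivalent conditions}. First, since $V$ is $T$-invariant and closed, it decomposes as a disjoint union of torus orbits $V = \bigsqcup_{\sigma \in \Delta_V} T_\sigma$ for some subset $\Delta_V \subseteq \Delta$ that is closed upward (if $\sigma \in \Delta_V$ and $\tau \succeq \sigma$ then $\tau \in \Delta_V$, since the orbit closure $\overline{T_\sigma} = \bigsqcup_{\tau \succeq \sigma} T_\tau$ lies in $V$). Tropicalizing each orbit yields the $G$-rational points of $N(\sigma)$, and since these are dense in $N(\sigma)$ while $\overline{N(\sigma)} = \bigcup_{\tau \succeq \sigma} N(\tau)$ in $\Trop(Y)$, I would first establish
\[
\Trop(V) = \bigsqcup_{\sigma \in \Delta_V} N(\sigma).
\]
The easy inclusion $\Trop(X \cap V) \subseteq \Trop(X) \cap \Trop(V)$ is then immediate from $(X \cap V)(K) \subseteq X(K) \cap V(K)$ and the fact that $\Trop(X)$ and $\Trop(V)$ are closed in $\Trop(Y)$.

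For the reverse inclusion, I would take $v \in \Trop(X) \cap \Trop(V)$ and let $\sigma \in \Delta_V$ be the unique cone with $v \in N(\sigma)$, so that $T_\sigma \subseteq V$. Since the proper projection $\pi_\jmath: X^\an \to \Trop(X)$ has dense image and is therefore surjective, I can choose $x \in X^\an$ with $\pi_\jmath(x) = v$. Because $v$ lies in $N(\sigma)$, the seminorm at $x$ assigns value $0$ to every character $\chi^u$ with $u \in S_\sigma \smallsetminus \sigma^\perp$ on the chart $X \cap U_\sigma$, which places $x$ in the preimage of $N(\sigma)$ under the global projection $Y^\an \to \Trop(Y)$; this preimage is exactly $T_\sigma^\an$. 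Hence $x \in (X \cap T_\sigma)^\an \subseteq (X \cap V)^\an$, and its image $v$ lies in $\Trop(X \cap V)$.

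The step requiring the most care is the identification of the preimage of the stratum $N(\sigma)$ with $T_\sigma^\an$. Under the identification $\Trop(U_\sigma) = \Hom(S_\sigma, \R)$ from Section~\ref{toric tropicalization}, a monoid homomorphism $\phi$ lies in $N(\sigma) \subseteq \Trop(U_\sigma)$ precisely when $\phi^{-1}(\RR) = \sigma^\perp \cap S_\sigma$, and unwinding the correspondence between points of $U_\sigma^\an$ and such homomorphisms shows that this translates to the seminorm vanishing exactly on the monomial ideal defining $T_\sigma$ in $U_\sigma$. This is essentially the computation that appears in the proof of $(1) \Rightarrow (2)$ of Proposition~\ref{equivalent conditions}, so once it is in hand the remainder of the argument is a bookkeeping exercise in the functorialities developed in this section.
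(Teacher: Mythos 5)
Your argument is correct and is essentially the paper's: the corollary is stated there as an immediate consequence of Proposition~\ref{equivalent conditions}, and you simply spell out that deduction, combining the orbit decomposition $\Trop(V)=\bigsqcup_{\sigma\in\Delta_V}N(\sigma)$ with a rerun of the $(1)\Rightarrow(2)$ argument (surjectivity of the proper map $X^\an\rightarrow\Trop(X)$ and the identification of the preimage of the stratum $N(\sigma)$ with $T_\sigma^\an$). Your observation that rerunning that analytic argument covers all points of $N(\sigma)$, not only the $G$-rational ones appearing in the proposition's statement, is a worthwhile clarification, but the route is the same.
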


\section{Analytification of quasiprojective varieties} \label{quasiproj section}

Let $X$ be a quasiprojective variety over $K$.

\begin{definition}
A quasiprojective toric embedding $\iota: X \hookrightarrow Y$ is a closed embedding of $X$ in a quasiprojective toric variety.
\end{definition}

\noindent A morphism of quasiprojective toric embeddings from $\iota$ to $\jmath: X \hookrightarrow Y'$ is an equivariant map $\varphi: Y' \rightarrow Y$ such that $\varphi \circ \jmath = \iota$.  Such a morphism induces a natural map of tropicalizations
\[
\Trop(\varphi) : \Trop(X, \jmath) \rightarrow \Trop(X, \iota),
\]
making $\Trop$ a functor from toric embeddings to topological spaces.  Recall that there are natural proper and continuous maps $\pi_\iota: X^\an \rightarrow \Trop(X, \iota)$, compatible with the tropicalizations of equivariant morphisms.

\begin{theorem} \label{quasiprojective analytification}
Let $X$ be a quasiprojective variety over $K$.  Then $\varprojlim \pi_\iota$ maps $X^\an$ homeomorphically onto $\varprojlim \Trop(X, \iota)$, where the limit is taken over all quasiprojective toric embeddings of $X$.
\end{theorem}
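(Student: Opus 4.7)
The plan is to mimic the strategy used in Theorem~\ref{main}, reducing the quasiprojective case to the affine one by means of an extension lemma: every affine embedding of every affine open of $X$ is recovered, up to an equivariant coordinate projection, from the restriction of some quasiprojective toric embedding of $X$ to an invariant affine chart. As in the affine case $\varprojlim \pi_\iota$ is continuous with image in $\varprojlim \Trop(X,\iota)$, and the quasiprojective analogue of Proposition~\ref{image}, proved by the same properness-plus-density argument using the proper glued maps $U_\sigma^{\an} \to \Trop(U_\sigma)$ from Section~\ref{toric tropicalization}, shows that each $\pi_\iota$ surjects onto $\Trop(X,\iota)$. It therefore suffices to prove that $\varprojlim \pi_\iota$ is bijective and that $X^{\an}$ carries the coarsest topology making every $\pi_\iota$ continuous.

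The extension lemma states: given an affine open $U\subset X$ and a closed embedding $\iota_U\colon U\hookrightarrow \A^m$, there is a closed embedding $\iota\colon X\hookrightarrow \P^{M}$, regarded as a quasiprojective toric embedding, together with a maximal cone $\sigma$ in the fan of $\P^{M}$, such that $\iota^{-1}(U_\sigma)=U$ and the composition $U\hookrightarrow U_\sigma = \A^{M} \to \A^m$ onto a coordinate subspace coincides with $\iota_U$. To construct it, fix a projective embedding $X\hookrightarrow \P^N$, write $U=X\cap D(g)$ for some homogeneous $g$ of degree $d$, and express the coordinates of $\iota_U$ as $f_i=h_i/g^e$ with $h_i$ homogeneous of common degree $de$. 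Consider the map
\[
X\longrightarrow \P^{m+N+1},\qquad x\longmapsto \bigl[\,g^{e}:h_1:\cdots:h_m:x_0^{de}:\cdots:x_N^{de}\,\bigr].
\]
The last $N+1$ coordinates cannot all vanish on $\P^N$, so this is a morphism on all of $X$; projecting onto those last coordinates gives the $d$-fold Veronese of $X$, which is a closed embedding, so the map itself is a closed embedding. In the affine chart where the first coordinate is nonzero the preimage is exactly $U$, and the first $m$ affine coordinates of the restriction are the $f_i$.

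With the extension lemma in hand, bijectivity reduces to Theorem~\ref{main} applied fiberwise over affine opens of $X$. For injectivity, two distinct points $x\neq x'$ of $X^{\an}$ both lie in $U^{\an}$ for some affine open $U\subset X$, since in the quasiprojective variety $X$ any two scheme-theoretic points are contained in a common affine open; they give distinct seminorms on $K[U]$ that are separated by some affine embedding $\iota_U$ by Theorem~\ref{main}, and the extension lemma then produces a quasiprojective toric embedding $\iota$ for which $\pi_\iota$ factors $\pi_{\iota_U}$ through an equivariant projection of ambient toric varieties, hence also separates $x$ and $x'$. For surjectivity, given $(y_\iota)\in\varprojlim \Trop(X,\iota)$, choose any $\iota_0\colon X\hookrightarrow Y_0$; the point $y_{\iota_0}$ lies in a stratum $N(\sigma)$ and $U:=\iota_0^{-1}(U_\sigma)$ is a nonempty affine open of $X$. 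Compatibility in the inverse system (using refinements $\iota_0\times\iota$) forces $y_\iota$, for $\iota$ obtained from the extension lemma applied to affine embeddings of $U$, to land in the analogous $U$-stratum, giving by coordinate projection a compatible family in $\varprojlim\Trop(U,\iota_U)$. Theorem~\ref{main} then yields $x\in U^{\an}\subset X^{\an}$ mapping to $(y_\iota)$, and well-definedness across the choice of $\iota_0$ follows from the uniqueness provided by the affine case. The topological statement follows analogously: $X^{\an}=\bigcup_U U^{\an}$ is an open cover, each $U^{\an}$ carries the coarsest topology making its affine tropicalization projections continuous, and those projections factor through the $\pi_\iota$ by the extension lemma.

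The main obstacle lies in the extension lemma itself: verifying that the constructed morphism is genuinely a closed embedding with exactly $U$ as the preimage of the chosen affine chart, and handling carefully the stratum compatibility used in the surjectivity argument. Once these two points are established, the remainder of the proof is an essentially formal repetition of the affine case fibered over affine opens of $X$.
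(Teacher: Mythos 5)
Your overall strategy is the same as the paper's: reduce to the affine case (Theorem~\ref{main}) via a lemma that realizes coordinate functions on suitable affine opens of $X$ as pullbacks of coordinates under toric embeddings of $X$ itself; this is the role of the paper's Lemma~\ref{qpembeddings}. But your extension lemma, as constructed, has a genuine gap. When $X$ is quasiprojective but not projective there is no closed embedding $X \hookrightarrow \P^M$ at all; what a quasiprojective toric embedding requires is that $X$ be closed in an \emph{invariant open subset} of the toric variety. Your map $x \mapsto [\,g^{e}:h_1:\cdots:h_m:x_0^{de}:\cdots:x_N^{de}\,]$ is at best a locally closed immersion of $X$, and nothing in the construction controls the boundary $V = \overline X \smallsetminus X$: to cut $X$ out of $\overline X$ by invariant opens one must arrange that $V$ is the preimage of a coordinate linear subspace. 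That is exactly condition (3) of Lemma~\ref{qpembeddings}, obtained by twisting so that $\I_V \otimes \O(nD)$ is globally generated and including such generating sections among the coordinates of the embedding of $\overline X$. Without this, the image of $X$ need not be closed in any invariant open, so your $\iota$ is not an object of the inverse system, and the injectivity and surjectivity arguments that feed it into $\varprojlim \Trop(X,\iota)$ do not get off the ground.

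There are secondary problems as well. The map $[x_0^{de}:\cdots:x_N^{de}]$ is not the Veronese and is not injective (points whose coordinates differ by $de$-th roots of unity are identified), so the closed-embedding verification fails even when $X$ is projective; this is fixable by using all monomials of degree $de$, and then a morphism whose composition with a linear projection is a closed immersion of a proper variety is itself a closed immersion. Also, not every affine open $U \subset X$ has the form $X \cap D(g)$ for a homogeneous $g$, so your injectivity step (``any two points lie in a common affine open, apply the extension lemma to it'') needs to be restricted to a cofinal family such as complements of ample hypersurface sections containing the boundary --- which is precisely how the paper arranges matters, fixing one such $U$ (the complement of an effective ample divisor containing $V$) and then covering $X$ by the complements of the coordinate hyperplanes containing $V$. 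Finally, the stratum-compatibility needed in your surjectivity argument is flagged but not carried out; the paper avoids this bookkeeping by proving the single claim that $U^\an$ maps bijectively onto the preimage $\mathcal{U}$ of $\Trop(U,\iota)$ in $\varprojlim \Trop(X,\jmath)$ and deducing the theorem from the covering. Until the boundary condition (3) is built into your extension lemma and these points are repaired, the proposed proof does not go through.
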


\noindent The proof of Theorem~\ref{quasiprojective analytification} is similar to the proof of Theorem~\ref{main}, given the following lemma which says, roughly speaking, that a quasiprojective variety has enough quasiprojective toric embeddings.

\begin{lemma} \label{qpembeddings}
Let $\overline X$ be a projective variety, with $V \subset \overline X$ a closed subscheme and $U \subset \overline X$ the complement of an effective ample divisor that contains $V$.  Then, for any generators $f_1, \ldots, f_r$ for $K[U]$, there is a closed embedding $\iota: \overline X \hookrightarrow \P^m$ such that
\begin{enumerate}
\item The open subvariety $U$ is the preimage of $\A^m$.
\item The function $f_i$ is the pullback of $x_i \in K[\A^m]$.
\item The closed subvariety $V$ is the preimage of a coordinate linear subspace.
\end{enumerate}
\end{lemma}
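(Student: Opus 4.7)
The plan is to construct $\iota$ as the projective embedding associated to a sufficiently large multiple $|nD|$ of the ample divisor $D = \overline{X} \setminus U$, with the homogeneous coordinates chosen so that all three conditions are built into the construction. Let $s \in H^0(\overline{X}, \O(D))$ be the canonical section whose zero scheme is $D$; then $K[U] = \bigcup_n s^{-n} H^0(\overline{X}, \O(nD))$. I would choose $n$ large enough that (i) $\O(nD)$ is very ample; (ii) $f_i = \tilde{f}_i / s^n$ for some global section $\tilde{f}_i \in H^0(\overline{X}, \O(nD))$, for each $i = 1, \ldots, r$; and (iii) the coherent sheaf $\I_V(nD) = \I_V \otimes \O(nD)$ is generated by its global sections, which holds for $n \gg 0$ by Serre's theorem for ample line bundles. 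Set $W = H^0(\overline{X}, \O(nD))$ and $W_V = H^0(\overline{X}, \I_V(nD)) \subset W$; since $V \subset D$ as subschemes, the section $s^n$ lies in $W_V$.

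Next, pick a basis $e_0 = s^n, e_1, \ldots, e_k$ of $W_V$ and extend it to a basis $e_0, \ldots, e_d$ of $W$. With $m = r + d$, define the coordinate sections
\[
t_0 = s^n, \qquad t_i = \tilde{f}_i \text{ for } 1 \le i \le r, \qquad t_{r+j} = e_j \text{ for } 1 \le j \le d,
\]
and let $\iota : \overline{X} \to \P^m$ be the morphism $y \mapsto [t_0(y) : \cdots : t_m(y)]$. Since $\{t_0, t_{r+1}, \ldots, t_{r+d}\} = \{e_0, e_1, \ldots, e_d\}$ is a basis of the very ample linear series $|nD|$, the map $\iota$ factors as the closed embedding $\overline{X} \hookrightarrow \P^d$ associated to $|nD|$ followed by a linear embedding $\P^d \hookrightarrow \P^m$, and is therefore itself a closed embedding. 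Condition (1) holds because $\iota^{-1}(\A^m) = \{t_0 \ne 0\} = \{s^n \ne 0\} = U$, and condition (2) holds because on $U$ the pullback of the affine coordinate $x_i = x_i / x_0$ is $\tilde{f}_i / s^n = f_i$ for $1 \le i \le r$.

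For condition (3), let $I \subset \{0, 1, \ldots, m\}$ be the set of indices $i$ for which $t_i$ lies in $W_V$, and let $L = \{x_i = 0 : i \in I\}$ be the corresponding coordinate linear subspace of $\P^m$. The scheme-theoretic preimage $\iota^{-1}(L)$ is cut out by the $\O_{\overline{X}}$-subsheaf of $\O(nD)$ generated by $\{t_i : i \in I\}$; by construction this collection contains the basis $\{e_0, \ldots, e_k\}$ of $W_V$, and since $W_V$ generates $\I_V(nD)$ by step (iii), the sheaf $\langle t_i : i \in I \rangle$ equals $\I_V(nD)$, so $\iota^{-1}(L) = V$ as closed subschemes. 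The crux of the argument is step (iii): once $\I_V(nD)$ is globally generated, $V$ can be realized as the common zero scheme of finitely many global sections of $\O(nD)$. The remainder is a bookkeeping matter of arranging that $s^n$ appears as $x_0$, the $\tilde{f}_i$ appear as $x_1, \ldots, x_r$, and a basis of $W_V$ is included among the remaining coordinates, together with enough additional sections to give a closed embedding.
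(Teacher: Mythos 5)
Your proposal is correct and follows essentially the same route as the paper: take $n$ large so that $nD$ is very ample, the $f_i$ extend to sections of $\O(nD)$, and $\I_V \otimes \O(nD)$ is globally generated, then embed by a spanning set of $H^0(\O(nD))$ containing $s^n$, the extensions $\tilde f_i$, and generators of $\I_V \otimes \O(nD)$. Your write-up just makes explicit the bookkeeping (bases of $W_V \subset W$, factoring through the embedding by $|nD|$, and the scheme-theoretic identification $\iota^{-1}(L) = V$) that the paper's proof leaves implicit.
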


\begin{proof}
Let $D$ be an effective ample divisor whose support is exactly $\overline X \smallsetminus U$.  Choose a sufficiently large integer $n$ such that $nD$ is very ample, the rational functions $f_1, \ldots, f_r$, extend to regular sections of $\O(nD)$, and $\I_V \otimes \O(nD)$ is globally generated, where $\I_V$ is the ideal sheaf of $V$.  Let $\iota: \overline X \hookrightarrow \P^m$ be the embedding corresponding to a generating set $\{ s_0, \ldots, s_m \}$ for the space of global sections of $\O(kD)$, where $s_0 = 1$, $s_i = f_i$ for $1 \leq i \leq r$, and some subset of the remaining sections generate $\I_V \otimes \O(nD)$.  Then $U$ is the preimage of the invariant affine open $\A^m$ where $s_0$ does not vanish, $f_i$ is the pullback of $x_i$, and $V$ is the preimage of the coordinate subspace cut out by the $s_i$ that generate $\I_V \otimes \O(nD)$, and the lemma follows.
\end{proof}

\noindent Note that if $\iota$ is an embedding of $\overline X$ in which $V$ is the preimage of a coordinate linear subspace, then the complements of the coordinate hyperplanes containing $V$ are an affine open cover of the quasiprojective variety $\overline X \smallsetminus V$.  We use this cover in the following proof of Theorem~\ref{quasiprojective analytification}.

\begin{proof}[Proof of Theorem~\ref{quasiprojective analytification}]
Let $X$ be a quasiprojective variety.  Choose a projective compactification $\overline X \supset X$, and let $V = \overline X \smallsetminus X$.  Let $U \subset \overline X$ be the complement of an effective ample divisor on $\overline X$ that contains $V$.

By Lemma~\ref{qpembeddings}, there is a closed embedding $\iota:\overline X \rightarrow \P^m$ such that $U$ is the preimage of $\A^m$ and $V$ is the preimage of a coordinate subspace.  Then $\iota$ restricts to a closed embedding of $X$ in an invariant open $Y \subset \P^m$ that contains $\A^m$.  We claim that $\pi_\iota$ maps $U^\an$ bijectively onto the preimage $\mathcal U$ of $\Trop(U, \iota)$ in $\varprojlim \Trop(X, \jmath)$.  The theorem follows from this claim, as we now explain.  First, the topology on the analytification is the coarsest such that $\varprojlim \pi_\jmath$ is continuous, so it will suffice to show that $X^\an$ maps bijectively onto $\varprojlim \Trop(X, \jmath)$.  To see surjectivity, we can replace $U$ by the complement $U_0$ of any coordinate hyperplane containing $V$, and the tropicalizations of these affine open subsets cover $\Trop(X, \iota)$.  Hence if $U^\an$ surjects onto $\mathcal U$ then $X^\an$ surjects onto $\varprojlim \Trop(X, \jmath)$.  Similarly, to see injectivity, if any two points in $X^\an$ have the same image in $\varprojlim \Trop(X, \jmath)$ then they have the same image in the tropicalization of one of these affine open subsets $U_0$.  Then both of these points are in $U_0^\an$ and hence they must be equal.  It remains to show that $U^\an$ maps bijectively onto $\mathcal U$.

First, we show that $U^\an$ injects into $\mathcal U$.  Let $x$ and $x'$ be points in $U^\an$ with the same image in $\mathcal U$.  By Lemma~\ref{qpembeddings}, for any function $f$ in the coordinate ring $K[U]$ we can choose a toric embedding $\iota$ of $X$ such that $U$ is the preimage of $\A^m$ and $f$ is the pullback of a coordinate linear function.  Then $|f|_x$ depends only on $\pi_\iota(x)$.  In particular, if $\pi_\iota(x) = \pi_\iota(x')$ for every $\iota$, then $|f|_x = |f|_{x'}$ for every $f \in K[U]$, and hence $x = x'$.  So $U^\an$ injects into $\mathcal U$, as claimed.

Finally, we show that $U^\an$ surjects onto $\mathcal U$.  Let $y$ be a point in $\mathcal U$. For any $f \in K[U]$, choose an embedding of $X$ in an invariant open subset of $\P^m$ such that $U$ is the preimage of $\A^m$ and $f$ is the pullback of $x_1$.  There is a point $x$ in $U^\an$ defined by setting $|f|_x$ equal to the exponential of the negative of the first coordinate of $y_\iota \in \R^m$.  For any two such embeddings $\iota: X \hookrightarrow Y$ and $\jmath: X \hookrightarrow Y'$, we can take the product $\iota \times \jmath: X \rightarrow Y \times Y'$, and $y_{\iota \times \jmath}$ projects to both $y_\iota$ and $y_\jmath$ in the inverse system, and it follows that $| \ |_x$ is well-defined.  By construction, $x$ is a point in $U^\an$ that maps to $y$.  Therefore, $U^\an$ surjects onto $\mathcal U$ as claimed, and the theorem follows.
\end{proof}

\section{Fields with trivial valuation}

We now consider tropicalizations and analytifications for varieties over an algebraically closed field $k$ equipped with the trivial valuation $\nu(k^*) \equiv 0$.  The geometry in this case remains interesting; for instance, if $k = \C$ then the singular cohomology of the analytification of a complex variety $X$ with respect to the trivial valuation on $\C$ is naturally isomorphic to the weight zero part of the mixed Hodge structure on $H^*(X(\C), \Q)$ \cite[Theorem~1.1(c)]{Berkovich00}.  The techniques and results for fields with nontrivial nonarchimedean valuations extend in a straightforward to fields with the trivial valuation, as follows.

Let $k$ be an algebraically closed field equipped with the trivial valuation, and let $X$ be a  closed subvariety of the torus $T^m$ over $k$.  Let $v$ be a point in $\RR^m$.  By definition, the tilted group ring $k[T]^v$ consists of Laurent polynomials $f = a_1 x^{u_1} + \cdots + a_r x^{u_r}$ with $a_i \in k^*$ such that $\<u_i, v \>$ is nonnegative for all $i$, and the initial form $\init_v(f)$ is the sum of those $a_i x^{u_i}$ such that $\<u_i, v\>$ is zero, for $f \in k[T]^v$.

Suppose $v$ is rational and nonzero, and $\rho$ is the ray spanned by $v$.  Then $k[T]^v$ is the coordinate ring of the affine toric variety $U_\rho$ corresponding to $\rho$.  If $f$ is in $k[T]^v$, then $\init_v(f)$ is canonically identified with the restriction of $f$ to the invariant divisor $D_\rho$ in $U_\rho$.  In particular, by the Nullstellensatz, if $X$ is a closed subvariety of $T$ then the tropical degeneration $X_v$, which is cut out by the initial forms of all functions $f$ in $I(X) \cap k[T]^v$, is nonempty if and only if the closure of $X$ in $U_\rho$ meets $D_\rho$.

\begin{definition} \label{trivial tropicalization}
The tropicalization $\trop(X)$ is the set of $v$ in $N_\RR$ such that $X_v$ is nonempty.
\end{definition}

\noindent This definition agrees with the definition of tropicalization over fields with nontrivial valuation, by Proposition~\ref{equivalent conditions}, and is the underlying set of a rational fan \cite{BJSST}.  The standard argument shows that $\trop(X)$ is the set of $v$ such that $\init_v(f)$ is not a monomial for every $f$ in the ideal of $X$.  Just as for affine varieties over fields with nontrivial valuations, the analytification $X^\an$ is the set of multiplicative seminorms on the coordinate ring $k[X]$ that are uniformly equal to one on $k^*$, equipped with the coarsest topology such that $x \mapsto |f|_x$ is continuous for every $f \in k[X]$. Evaluation of seminorms on the restrictions of characters induces a proper continuous map $\pi:X^\an \rightarrow N_\RR$.

\begin{proposition} \label{BieriGroves}
The image $\pi(X^\an)$ is exactly $\trop(X)$.
\end{proposition}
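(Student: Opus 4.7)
Following the template of Proposition~\ref{image}, I would split the proof into two containments. The trivial-valuation setting rules out the density-of-$X(k)$ argument used there (the image of $X(k)$ under $\pi$ is just $\{0\}$), so the surjectivity half requires a base-change detour.

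\emph{Containment $\pi(X^\an)\subseteq\trop(X)$.} Fix $x\in X^\an$ and put $v=\pi(x)$, so $|x^u|_x=\exp(-\langle u,v\rangle)$ for $u\in M$. Compatibility of $|\cdot|_x$ with the trivial valuation yields $|n\cdot 1|_x\leq 1$ for every $n\in\Z$, so the familiar binomial bound
\[
|f+g|_x^N = |(f+g)^N|_x \leq (N+1)\max(|f|_x,|g|_x)^N
\]
and the limit $N\to\infty$ force the ultrametric inequality. If $v\notin\trop(X)$, by the characterization quoted after Definition~\ref{trivial tropicalization} some $f=\sum a_i x^{u_i}\in I(X)\cap k[T]^v$ has monomial initial form $a_j x^{u_j}$, meaning $\langle u_j,v\rangle=0$ while $\langle u_i,v\rangle>0$ for all $i\neq j$. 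Since $|a_i|_x=1$, the term $|a_j x^{u_j}|_x=1$ strictly dominates every other $|a_i x^{u_i}|_x=\exp(-\langle u_i,v\rangle)$, and the ultrametric inequality with a unique maximum gives $|f|_x=1$, contradicting $f\in I(X)$.

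\emph{Containment $\trop(X)\subseteq\pi(X^\an)$.} Since $\pi$ is proper its image is closed, and since $\trop(X)$ is a rational fan its rational points are dense in it, so it suffices to realise each $v\in\trop(X)\cap N_\Q$ as some $\pi(x)$. Set $K=k((t))$ with its $t$-adic valuation $\nu_t$ and write $X_K=X\times_k K$. A standard base-change check---initial ideals commute with faithfully flat extensions, so by the equivalence (1)$\Leftrightarrow$(4) of Proposition~\ref{equivalent conditions} the schemes $X_v$ and $(X_K)_v$ are simultaneously empty or nonempty---gives $\trop(X_K)=\trop(X)$ in $N_\RR$. Applying Proposition~\ref{image} to the affine embedding $X_K\hookrightarrow T^m\hookrightarrow\A^m_K$ produces a point $y\in X_K^\an$ whose coordinate valuations are exactly the finite vector $v$. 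Restricting the seminorm $|\cdot|_y$ from $K[X_K]=K\otimes_k k[X]$ to $k[X]$ gives a multiplicative seminorm satisfying the triangle inequality and compatible with the trivial valuation on $k$ (since $\nu_t|_k\equiv 0$), and its $\pi$-image is $v$ by construction. The one nontrivial input is the base-change identity $\trop(X_K)=\trop(X)$; this is the only step where care is needed, but is essentially routine from faithful flatness of $k\to K$.
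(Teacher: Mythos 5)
Your first containment is fine and is essentially the paper's argument: derive the ultrametric inequality, note that every monomial has strictly positive seminorm, and conclude that no $f \in I(X)\cap k[T]^v$ can have monomial initial form at $v=\pi(x)$.

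The surjectivity half has a genuine gap. Your entire argument funnels through the identity $\trop(X_K)=\trop(X)$ for an extension of the trivially valued field $k$ to a nontrivially valued $K$, and you dismiss this as ``essentially routine from faithful flatness of $k\to K$,'' with initial ideals commuting with the extension. That is precisely the nontrivial input: the tilted ring $R[M]^v$ over the valuation ring of $K$ is strictly larger than $k[T]^v\otimes_k R$ (it contains terms $a x^u$ with $\langle u,v\rangle<0$ compensated by $\nu(a)>0$), so an element of $I(X_K)\cap R[M]^v$ is not a combination of elements of $I(X)\cap k[T]^v$ with coefficients in $R[M]^v$ in any obvious way, and a priori a $K$-linear combination could acquire a monomial initial form even though no element of $I(X)\cap k[T]^v$ has one. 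The inclusion you actually need, $\trop(X)\subseteq\trop(X_K)$, is exactly the hard direction of Proposition~\ref{base change}, which the paper proves in the appendix by a generic-projection reduction to the hypersurface case (Lemma~\ref{hypersurface}, using the density theorem of Einsiedler--Kapranov--Lind) precisely because no short flatness argument was available; the equivalence (1)$\Leftrightarrow$(4) of Proposition~\ref{equivalent conditions} lives over a single nontrivially valued field and does not bridge $k$ and $K$. There is also a smaller defect: $K=k((t))$ is not algebraically closed, while Proposition~\ref{image} and the statement that rational points of $\trop(X_K)$ are tropicalizations of $K$-points require $K$ algebraically closed and complete, so you should pass to the completion of an algebraic closure of $k((t))$ (value group $\Q$, residue field $k$).

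If you repair these two points---cite Proposition~\ref{base change} instead of asserting flatness, and take $K$ to be the completed algebraic closure of $k((t))$---your route does work and is genuinely different from the paper's. The paper avoids base change altogether: using properness of $\pi$, invariance of the image under positive scaling, and the fan structure of $\trop(X)$, it reduces to hitting each rational ray $\rho\subset\trop(X)$, and then produces the required seminorm directly over $k$ as $\exp(-\nu)$ for a valuation $\nu$ centered on $\overline X\cap D_\rho$ in the toric variety $U_\rho$, since $x^u$ vanishes along $D_\rho$ to order $\langle u,v_\rho\rangle$. That argument is self-contained over $k$, whereas yours outsources the work to the appendix; conversely, yours hits every rational point of $\trop(X)$ directly rather than only rational rays, at the cost of the base-change theorem.
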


\begin{proof}
Let $x$ be a point in the analytification $X^\an$.  The usual proof of the ultrametric inequality for nonarchimedean norms shows that $|f + g|_x$ is equal to the maximum of $|f|_x$ and $|g|_x$ if $|f|_x \neq |g|_x$.  Now, suppose $f = a_1 x^{u_1} + \cdots + a_r x^{u_r}$ is in $I(X) \cap k[T]^v$.  Since $f$ is in the ideal of $X$, $|f|_x$ is zero, but the seminorm of each monomial is positive, so there must be at least two monomials in $f$ of maximal norm. It follows that the initial form $\init_{\pi(x)}(f)$ is not a monomial, and hence $\pi(x)$ is in $\trop(X)$.

It remains to show that $\trop(X)$ is contained in the image of $\pi$.  Since $\pi$ is proper and its image is invariant under multiplication by positive scalars, and since $\trop(X)$ is the underlying set of a rational fan, it will suffice to show that any rational ray in $\trop(X)$ is spanned by a point in the image of $\pi$.  Let $\rho$ be a rational ray in $\trop(X)$.  Then the closure of $X$ in $U_\rho$ intersects $D_\rho$.  Let $\nu$ be a valuation centered in $\overline X \cap D_\rho$.  The order of vanishing of a monomial $x^u$ along $D_\rho$ is $\<u,v_\rho\>$, so $\nu(x^u)$ is positive if and only if $\<u, v_\rho\>$ is positive.  It follows that the image of the multiplicative seminorm $\exp(-\nu) \in X^\an$ spans $\rho$, as required.
\end{proof}

We now consider extended tropicalizations of subvarieties of toric varieties over fields with the trivial valuation.

\begin{definition}
Let $\iota: X \hookrightarrow Y(\Delta)$ be a closed embedding in a toric variety over $k$.  Then the extended tropicalization $\Trop(X, \iota)$ is the disjoint union of the tropicalizations $\trop(X \cap T_\sigma)$, for $\sigma \in \Delta$.
\end{definition}

\noindent  Let $K$ be an algebraically closed extension of $k$ which is complete with respect to an extension of the trivial valuation on $k$.  Then, by Propositions~\ref{equivalent conditions} and \ref{base change}, $\Trop(X, \iota)$ is equal to the extended tropicalization of the base change $\Trop(X_K, \iota_K)$.  If $X$ is not necessarily affine, the analytification $X^\an$ is constructed by gluing the analytifications of its affine open subvarieties in the canonical way, and there is a natural continuous and proper map $\pi_\iota: X^\an \rightarrow \Trop(X, \iota)$.   In the affine case, this projection takes $x \in U_\sigma^\an$ to the monoid homomorphism $[u \mapsto -\log | \chi^u|_x],$ for $u$ in $\sigma^\vee \cap M$.  

\begin{theorem}
Let $X$ be an affine \emph{(}resp. quasiprojective\emph{)} variety over $k$.  Then $\varprojlim \pi_\iota$ maps $X^\an$ homeomorphically onto $\varprojlim \Trop (X, \iota)$, where the limit is taken over all affine embeddings $\iota:X \hookrightarrow \A^m$ \emph{(}resp. quasiprojective toric embeddings $\iota: X \hookrightarrow Y(\Delta)$\emph{)}.
\end{theorem}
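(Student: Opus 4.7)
The plan is to adapt the proofs of Theorems~\ref{main} and \ref{quasiprojective analytification} to the trivial-valuation setting, using Proposition~\ref{BieriGroves} (applied orbit-by-orbit along the torus stratification of the ambient toric variety) in place of Proposition~\ref{image} to identify the image of each $\pi_\iota$. As in those proofs, the topology on $X^\an$ is the coarsest one making every evaluation $x \mapsto |f|_x$ continuous, and the topology on $\varprojlim \Trop(X,\iota)$ is the coarsest making every coordinate projection continuous, so $\varprojlim \pi_\iota$ will be continuous automatically and it will suffice to show that it is a bijection onto $\varprojlim \Trop(X, \iota)$.

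For the affine case, I would first establish injectivity exactly as in the proof of Theorem~\ref{main}: given $x, x' \in X^\an$ with matching images under $\varprojlim \pi_\iota$, and given $f \in k[X]$, extend $\{f\}$ to a finite generating set $f = f_1,\dots,f_m$ of $k[X]$, take the induced embedding $\iota$, and read $|f|_x = |f|_{x'}$ off the first coordinate. For surjectivity, given $(y_\iota) \in \varprojlim \Trop(X, \iota)$, I would define $|f|_x := \exp(-y_{\iota,1})$ for any embedding $\iota$ whose first generator is $f$; the comparison of two such choices $\iota, \jmath$ through $\iota \times \jmath$ and the coordinate transposition, as in the proof of Theorem~\ref{main}, shows well-definedness. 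Multiplicativity and the triangle inequality are then checked against a single embedding whose generators include both $f$ and $g$.

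The one genuinely new point, which I expect to be the main obstacle, is compatibility with the trivial valuation, namely the verification that $|a|_x = 1$ for every $a \in k^*$. In the nontrivial-valuation case this came for free from the embedding axioms; here the trick is to enlarge any generating set to include the constant function $a$ itself. Then every $k$-point of $X$ has first tropical coordinate $\nu(a) = 0$, so $\Trop(X, \iota)$ lies in the hyperplane where the first coordinate vanishes, forcing $y_{\iota,1} = 0$ and hence $|a|_x = 1$. This confirms that $|\ |_x$ is a valid point of $X^\an$ mapping to $(y_\iota)$ and completes the affine case.

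For the quasiprojective case, Lemma~\ref{qpembeddings} is stated over $K$, but its proof uses only ample line bundles and global generation, so it holds verbatim over $k$. The reduction in the proof of Theorem~\ref{quasiprojective analytification} — pick a projective compactification with boundary $V$, an ample effective divisor containing $V$, and cover $X$ by the complements $U$ of the coordinate hyperplanes that contain $V$ under suitable embeddings, then apply the affine case to identify each $U^\an$ with the preimage of $\Trop(U, \iota)$ — is a formal consequence of the affine statement and the functoriality of $\Trop$, so it carries through without modification.
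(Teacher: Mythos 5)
Your overall route is the paper's: adapt the proofs of Theorems~\ref{main} and \ref{quasiprojective analytification}, with Proposition~\ref{BieriGroves} supplying the identification of the image of $\pi_\iota$. However, the one step you single out as genuinely new is justified by an argument that fails in the trivial-valuation setting. You deduce that $\Trop(X,\iota)$ lies in the hyperplane where the coordinate of a constant generator $a \in k^*$ vanishes from the fact that every $k$-point has that tropical coordinate equal to $\nu(a)=0$. But with the trivial valuation, $\Trop(X,\iota)$ is \emph{not} the closure of the image of $X(k)$ (see Definition~\ref{trivial tropicalization}): the image of $X(k)$ consists only of points with coordinates in $\{0,\infty\}$, e.g.\ for $X = \G_m$ the image of $X(k)$ is the single point $0$ while $\trop(X) = \RR$. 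So constraints satisfied by tropicalized $k$-points do not transfer to $\Trop(X,\iota)$, and the same objection applies to your unstated verification of multiplicativity and the triangle inequality ``against a single embedding whose generators include both $f$ and $g$'': in the proof of Theorem~\ref{main} those checks rest precisely on the density of tropicalized rational points in $\Trop(X,\iota)$, which is what breaks here.

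The gap is repaired by the tool you already cite. By Proposition~\ref{BieriGroves}, applied orbit by orbit, every $y_\iota \in \Trop(X,\iota)$ equals $\pi_\iota(z)$ for some $z \in X^\an$; its coordinates are $-\log|f_i|_z$ for the chosen generators, and hence automatically satisfy $|a|_z = 1$ for $a \in k^*$, $|fg|_z = |f|_z\,|g|_z$, and $|f+g|_z \leq |f|_z + |g|_z$ whenever $a$, $f$, $g$, $fg$, $f+g$ are included among the generators; this transfers the required relations to the coordinates of $y_\iota$ and hence to your seminorm $|\ |_x$. (Alternatively, argue with initial forms: if the first generator is the constant $a$, then $x_1 - a$ lies in the ideal of $\iota(X)$, and for any $v$ whose first coordinate is nonzero the initial form of a suitable monomial multiple of $x_1 - a$ is a monomial, while strata with $x_1 = 0$ miss $X$ entirely; so the first coordinate of any point of $\Trop(X,\iota)$ is $0$.) With this repair, the rest of your argument -- injectivity, well-definedness via $\iota \times \jmath$, and the quasiprojective reduction through Lemma~\ref{qpembeddings}, which does indeed hold verbatim over $k$ -- goes through and agrees with the proof the paper intends.
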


\begin{proof}  Similar to the proofs of Theorems~\ref{main} and \ref{quasiprojective analytification}, since the image of $X^\an$ in $\R^m$ (resp. $\Trop(Y)$) is exactly $\Trop(X,\iota)$.  \end{proof}

\section{Appendix: Invariance of tropicalization under field extensions}

Here we show that tropicalization is invariant under extensions of valued fields.  This is straightforward in the case where the base field has a nontrivial valuation, but we have not been able to find a reference in the case where the base field has the trivial valuation.  Here we give a brief unified treatment of the general case.  In this appendix, since we consider only tropicalizations and not analytifications, we do not require the fields $k$ and $K$ to be complete with respect to their valuations.

Let $k$ be an algebraically closed field with a valuation that may or may not be trivial.  Let $K$ be an algebraically closed extension of $k$ with a valuation that extends the given valuation on $k$.

\begin{proposition} \label{base change}
Let $X$ be a subvariety of $T$ over $k$.  Then $\trop(X)$ is equal to $\trop(X_K)$.
\end{proposition}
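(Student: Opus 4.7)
The plan is to use the algebraic characterization from Proposition~\ref{equivalent conditions}(5), which identifies $v \in \trop(X)$ with the condition that no element $f \in I(X) \cap R[M]^v$ has monomial initial form $\init_v(f)$ in $\tilde k[M]$ (here $\tilde k$ denotes the residue field of $k$); the analogous criterion characterizes $\trop(X_K)$ with $I(X)$ replaced by $I(X_K) = I(X) \cdot K[M]$ and $\tilde k$ by $\tilde K$. The inclusion $\trop(X_K) \subseteq \trop(X)$ is then immediate: a witness $f \in I(X) \cap R[M]^v$ with monomial initial form remains such a witness after base change to $R_K$, since its initial form in $\tilde K[M]$ is the same monomial.

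For the reverse inclusion, suppose $v \notin \trop(X_K)$ and pick $g \in I(X_K) \cap R_K[M]^v$ with $\init_v(g) = c\, x^u$ a monomial. I would extract the desired witness in $I(X)$ using a $k$-cartesian basis $\{c_\alpha\}$ of $K$ over $k$, i.e.\ a basis satisfying
\[
\nu\!\left(\sum_\alpha a_\alpha c_\alpha\right) = \min_\alpha \left(\nu(a_\alpha) + \nu(c_\alpha)\right) \quad\text{for all } a_\alpha \in k;
\]
the existence of such a basis is standard in non-Archimedean analysis (see e.g.\ BGR~\S2.4). Expanding $g = \sum_\alpha c_\alpha g_\alpha$ uniquely with $g_\alpha \in I(X)$, the cartesian property prevents coefficient-level weight cancellation, giving
\[
\init_v(g) = \sum_{\alpha \in A} \overline{c_\alpha}\,\init_v(g_\alpha),
\]
where $A$ indexes the summands achieving minimum $v$-weight on $c_\alpha g_\alpha$, and the normalized residues $\{\overline{c_\alpha}\}_{\alpha \in A}$ in $\tilde K$ are $\tilde k$-linearly independent. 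Since the left-hand side is a single monomial and each $\init_v(g_\alpha)$ lies in $\tilde k[M]$, equating coefficients monomial by monomial and invoking this linear independence forces each $\init_v(g_\alpha)$ with $\alpha \in A$ to be a $\tilde k$-scalar multiple of $x^u$, hence itself a monomial or zero. At least one such $g_\alpha$ is nonzero, supplying the required $f \in I(X) \cap R[M]^v$ and showing $v \notin \trop(X)$.

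The main technical obstacle is setting up the cartesian basis and justifying the initial-form decomposition, particularly in the trivial-valuation case on $k$ that motivates the appendix. There $\nu|_{k^*} \equiv 0$, so cartesianness on the $k$-side is automatic for any basis; the subtlety is instead to choose $\{c_\alpha\}$ compatibly with the valuation filtration on $K$, so that the valuations $\nu(c_\alpha)$ stratify by value-group coset and the residues $\overline{c_\alpha}$ are $\tilde k$-linearly independent within each coset. This reduces the essential input to the standard fact that $\tilde K$ is a field extension of $\tilde k$.
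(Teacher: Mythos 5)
Your overall strategy -- descend a monomial initial form from $I(X_K)$ to $I(X)$ by expanding along a valuation-adapted $k$-basis of $K$ -- is genuinely different from the paper's proof (which reduces to the hypersurface case by a generic Bieri--Groves-style projection and then uses density of the image of $X(K)$ in the corner locus, citing \cite[Theorem~2.1.1]{EKL}), and it could be made to work; but as written the key lemma is false. A $k$-cartesian basis of $K$ over $k$, with the exact equality $\nu(\sum a_\alpha c_\alpha)=\min_\alpha(\nu(a_\alpha)+\nu(c_\alpha))$, does not exist in the central case of this appendix. Take $k=\C$ with the trivial valuation and $K$ the field of Puiseux series over $\C$: since $\tilde K=\C=k$, any cartesian family can contain at most one element of each value $\gamma$ in the value group $\Q$ (if $\nu(c_1)=\nu(c_2)$, subtracting the residue ratio $\lambda\in k$ gives $\nu(c_1-\lambda c_2)>\nu(c_1)$, violating cartesianness), so such a family is countable, whereas $\dim_\C K$ is uncountable (the series $\sum_n a^n t^n$, $a\in\C$, are linearly independent). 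The citation of BGR~\S2.4 does not help: those results concern finite-dimensional normed spaces over complete fields, and even there one gets only $\alpha$-cartesian bases with $\alpha<1$ in general, which is useless for an initial-form computation that needs exact leading-term control. Your fallback remark that in the trivially valued case ``cartesianness on the $k$-side is automatic for any basis'' is also wrong: for $c_1=1+t$, $c_2=1$ one has $\nu(c_1-c_2)=1>\min(\nu(c_1),\nu(c_2))$.

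The argument can be repaired, and then it is arguably more elementary than the paper's. You only ever need orthogonality on the finite-dimensional $k$-span $W$ of the coefficients of the single witness $g$: extending a basis of $W$ to a $k$-basis of $K$ shows the components $g_\alpha$ still lie in $I(X)$, since $I(X)\otimes_k K=\bigoplus_\alpha I(X)c_\alpha$. When $\nu|_{k^*}\equiv 0$ (the only hard case), a finite-dimensional $W\subset K$ meets only finitely many steps of the filtration $W\cap\{\nu\geq\gamma\}$, and any basis adapted to this finite flag is exactly cartesian; with that, your no-cancellation computation of $\init_v(g)$ and the conclusion that some $\init_v(g_\alpha)$ is a monomial go through (after the harmless normalization of multiplying $g_\alpha$ by a Laurent monomial to put it in $k[T]^v$, and after noting that it suffices to treat rational $v$, or to invoke the initial-form description of $\trop(X_K)$ for arbitrary real $v$, since both sets are closed). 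When $\nu|_{k^*}$ is nontrivial, finite-dimensional orthogonal bases can genuinely fail to exist (e.g.\ $k$ dense in $K$), but there the containment $\trop(X)\subseteq\trop(X_K)$ is immediate from $X(k)\subseteq X(K)$, as the paper observes, so the cartesian machinery is not needed. With these corrections your proof avoids both the generic projection and the appeal to \cite{EKL}, at the cost of the valuation-theoretic bookkeeping above; as submitted, however, the existence claim it rests on is not true.
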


\noindent It is straightforward to see that $\trop(X_K)$ is contained in $\trop(X)$, as follows.  Suppose $v$ is in $\trop(X_K)$.  Then the initial form $\init_v(f)$ is not a monomial, for every function $f$ in the ideal of $X_K$.  Now the ideal of $X$ is contained in the ideal of $X_K$, so it follows that $v$ is in $\trop(X)$.  If the valuation on $k$ is nontrivial, then the reverse containment is also easy, since the image of $X(k)$ in $N_\R$ is contained in the image of $X(K)$.

We now show the reverse containment in the hypersurface case.

\begin{lemma} \label{hypersurface}
Let $X$ be a hypersurface in $T$ over $k$.  Then $\trop(X)$ is contained in $\trop(X_K)$.
\end{lemma}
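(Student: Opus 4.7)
The plan is to identify both $\trop(X)$ and $\trop(X_K)$ with the same combinatorial object read off from the defining equation of $X$, so that the inclusion becomes an equality almost by construction.

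Write $X = V(f)$ for some irreducible Laurent polynomial $f = \sum_u a_u x^u \in k[M]$; irreducibility is available since $X$ is a subvariety of the torus and $k$ is algebraically closed. First I would verify that, for the principal ideal $(f)$, the characterization of $\trop(X)$ recalled just after Definition~\ref{trivial tropicalization} collapses onto the single generator:
\[
\trop(X) = \{\, v \in N_\RR : \init_v(f) \text{ is not a monomial} \,\}.
\]
Indeed, any $g \in (f) \cap k[T]^v$ has the form $hf$ for some $h \in k[M]$, and on the Laurent polynomial ring $\init_v$ is multiplicative, so $\init_v(g) = \init_v(h)\,\init_v(f)$. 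Since the units of $k[M]$ are exactly the nonzero monomials, a product of two nonzero Laurent polynomials is a monomial only when both factors are, so the condition on all $g$ in the ideal reduces to the single condition on $f$.

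Next I would carry out the analogous computation over $K$. Because $k$ is algebraically closed, $k[M]/(f)$ is geometrically integral, so $f$ remains irreducible in $K[M]$ and $I(X_K) = (f)K[M]$. The same multiplicativity argument then gives
\[
\trop(X_K) = \{\, v \in N_\RR : \init_v(f) \text{ is not a monomial in } K[M] \,\}.
\]
Finally, whether $\init_v(f)$ is a monomial depends only on the support of $f$ and the valuations $\nu(a_u)$ of its coefficients: it is a monomial precisely when $\min_u\bigl(\nu(a_u) + \langle u, v \rangle\bigr)$ over $u$ in the support is attained at a unique index. Since the $a_u$ lie in $k$ and the valuation on $K$ restricts to that on $k$, this numerical criterion is literally the same in both settings, so the two sets coincide. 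The asserted inclusion, and in fact equality, follows.

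The step that requires genuine care is the preservation of irreducibility of $f$ under the extension $k \subset K$, for which algebraic closedness of $k$ is used essentially; without it, $f$ could acquire repeated factors in $K[M]$, in which case $I(X_K) = (f_{\mathrm{rad}})K[M]$ strictly enlarges $(f)K[M]$, and one would instead argue via the UFD property that $\init_v(f_{\mathrm{rad}})$ is a monomial iff each of its irreducible factors has monomial initial form, iff $\init_v(f)$ is a monomial.
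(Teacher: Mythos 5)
Your reduction from the whole ideal to the single generator $f$ (multiplicativity of $\init_v$ plus the fact that the units of the Laurent polynomial ring are the nonzero monomials) is sound, and it is indeed the observation that makes the hypersurface case special. But the crucial step is the assertion that $\trop(X_K)$ \emph{equals} the set of $v$ with $\init_v(f)$ not a monomial. The multiplicativity argument only shows that this set coincides with the set of $v$ at which no element of $I(X_K)$ has monomial initial form; it says nothing about why such a $v$ actually lies in $\trop(X_K)$. When the valuation on $K$ is nontrivial (the case of interest, e.g.\ $k$ trivially valued inside a nontrivially valued $K$), $\trop(X_K)$ is by definition the closure of the image of $X_K(K)$ under coordinatewise valuation, and the containment you need --- every point of the corner locus of the tropical polynomial of $f$ lies in this closure --- is precisely Kapranov's theorem, the hard direction of the fundamental theorem for hypersurfaces. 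The characterization ``recalled just after Definition~\ref{trivial tropicalization}'' is only available in the trivially valued setting, where tropicalization is defined through degenerations; it cannot simply be transported to $K$ ``by the same argument.'' This is exactly where the paper's proof does real work: it first reduces to the case of a nontrivial valuation on $K$ (using that $\trop(X_{K'})\subset\trop(X_K)$ for a further extension $K'$), and then invokes the density of $\trop(X(K))$ in the corner locus of $\Psi_f$ \cite[Theorem~2.1.1]{EKL}. Note also that if one were allowed to quote the initial-form description of $\trop$ over an arbitrary (possibly trivially valued, possibly incomplete) valued field, the appendix would be largely unnecessary; the whole point is that this is not available off the shelf in the trivially valued case, and the statement from the body of the paper (Proposition~\ref{equivalent conditions}) assumes $K$ complete with nontrivial valuation and $G$-rational $v$.

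Your closing diagnosis --- that the only delicate point is irreducibility of $f$ after base change --- is therefore off target. Irreducibility (or squarefreeness, which is what one actually needs to have $I(X_K)=(f)K[M]$) is preserved because $k$ is algebraically closed, and in any case is a minor matter, as you note. The genuine gap is the unproved containment of the non-monomial locus (equivalently, the corner locus of $\Psi_f$) into $\trop(X_K)$; only the easy containment $\trop(X)\subset\{v : \init_v(f)\mbox{ not a monomial}\}$ over $k$ comes for free, and that alone does not yield Lemma~\ref{hypersurface}. To repair the argument, keep your reduction to the single generator, then either cite Kapranov's theorem over a nontrivially valued algebraically closed extension or reproduce the paper's density argument, together with the reduction step that lets you assume the valuation on $K$ is nontrivial. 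A smaller point worth tightening: $\init_v$ is defined on the tilted ring, so for $g=hf\in I(X_K)\cap R[M]^v$ the factors $h$ and $f$ need not lie in $R[M]^v$; one should rescale by a constant and a monomial (harmless for the question of being a monomial) before invoking multiplicativity.
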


\begin{proof}
Since $\trop(X_{K'})$ is contained in $\trop(X_{K})$ for any extension of valued fields $K'$ over $K$, we may assume that the valuation on $K$ is nontrivial.  Let $f = a_1 x^{u_1} + \cdots + a_r x^{u_r}$ be a defining equation for $X$ with coefficients in $k$.  Then $\trop(X)$ is contained in the corner locus of the piecewise linear function $\Psi_f$ on $N_\R$ defined by
\[
\Psi_f(w) = \min \{ \<u_1, w\> + \nu(a_1) , \ldots, \<u_r, w\> + \nu(a_r) \};
\]
if $v$ is not in this corner locus and $\Psi_f(v)$ is equal to $\<u_i,v\> + \nu(a_i)$, then the initial form of another defining equation $\init_v\left( \frac{f}{a_ix^{u_i}}\right)$ is equal to one, so $v$ is not in $\trop(X)$.  Standard arguments show that the image of $X(K)$ is dense in the corner locus of $\Psi_f$ \cite[Theorem~2.1.1]{EKL}, so this corner locus is contained in $\trop(X_K)$, and the lemma follows.
\end{proof}

\noindent  We now prove Proposition~\ref{base change} by reducing to the hypersurface case, using a general projection of tori in the sense of \cite[Section~5]{tropicalfibers}; a similar method of reduction was used by Bieri and Groves  \cite{BieriGroves84}.

\begin{proof}[Proof of Proposition~\ref{base change}]
Both $\trop(X)$ and $\trop(X_K)$ are underlying sets of finite polyhedral complexes of pure dimension $\dim X$.  After choosing a polyhedral complex on each and subdividing, we may assume that $\trop(X_K)$ is a subcomplex of $\trop(X)$.  Then we can choose a general rational projection $\phi: N_\R \rightarrow N'_\R$ to a vector space of dimension $\dim X + 1$ corresponding to a surjection of lattices $N \rightarrow N'$ such that the image of each maximal cell in $\trop(X)$  has codimension one in $N'_\R$ and the images of any two distinct cells intersect in codimension at least two.  Since $\trop(X)$ contains $\trop(X_K)$, as noted above, it follows that $\trop(X)$ is equal to $\trop(X_K)$ if and only if $\phi(\trop(X))$ is contained in $\phi(\trop(X_K))$.

The map of vector spaces $\phi$ corresponds to a split surjection of tori $\varphi: T \rightarrow T'$, and $\phi(\trop(X_K))$ is equal to $\trop(X')$, where $X'$ is the closure of $\varphi(X)$ \cite[Corollary~4.5]{tropicalfibers}.   Now $\phi(\trop(X))$ is contained in $\trop(X')$, since $\init_v(\varphi^*f)$ is equal to $\init_{\phi(v)}(f)$, for $f \in k[T']$ and $v \in N_\R$.  And $X'$ is a hypersurface, so $\trop(X')$ is equal to $\trop(X'_K)$, by Lemma~\ref{hypersurface}.  It follows that $\phi(\trop(X))$ is contained in $\phi(\trop(X_K))$, as required.
\end{proof}

\bibliography{math}
\bibliographystyle{amsalpha}

\end{document}